\definecolor{darkgreen}{rgb}{0,0.45,0}
\definecolor{darkred}{rgb}{0.9,0,0}
\definecolor{darkblue}{rgb}{0,0,0.6}
\newlength{\ontotipoffset}
\tikzset{>=xyto}
\newbox\pbbox
\def\pb{\save[]+<3mm,-3mm>*{\copy\pbbox} \restore}
\theoremstyle{plain}
\newtheorem{theorem}{Theorem}
\newtheorem{proposition}[theorem]{Proposition}
\newtheorem{lemma}[theorem]{Lemma}
\newtheorem{corollary}[theorem]{Corollary}
\theoremstyle{definition}
\newtheorem{definition}[theorem]{Definition}
\newtheorem{notation}[theorem]{Notation}
\newtheorem{remark}[theorem]{Remark}
\newcommand{\Ebar}{\overline{E}}
\newcommand{\Hbar}{\overline{H}}
\newcommand{\I}{\mathcal{I}}
\renewcommand{\P}{\mathbf{P}}
\newcommand{\PP}{\mathrm{P}}
\newcommand{\Q}{\mathbf{Q}}
\newcommand{\QQ}{\mathrm{Q}}
\newcommand{\U}{\mathbf{U}_{\alpha}}
\newcommand{\UU}{{\mathrm{U}_{\alpha}}}
\newcommand{\UUt}{{\widetilde{\mathrm{U}}_{\alpha}}}
\newcommand{\W}{{\mathbf{W}_{\alpha}}}
\newcommand{\WW}{{\mathrm{W}_{\alpha}}}
\newcommand{\WWt}{{\widetilde{\mathrm{W}}_{\alpha}}}
\newcommand{\y}{\mathbf{y}}
\newcommand{\colim}{\operatorname{colim}}
\newcommand{\Eq}{\mathrm{Eq}}
\newcommand{\Hom}{\operatorname{Hom}}
\newcommand{\op}{\operatorname{op}}
\newcommand{\Sets}{\mathbf{Sets}}
\newcommand{\sSets}{\mathbf{sSets}}
\newcommand{\iso}{\cong}
\newcommand{\adjoint}{\dashv}
\newcommand{\name}[1]{\ulcorner {#1} \urcorner}
\begin{document}
\title{Univalence in simplicial sets}


\author{Chris Kapulkin}
\address[Chris Kapulkin]{University of Pittsburgh}
\email{krk56@pitt.edu}

\author{Peter LeFanu Lumsdaine}
\address[Peter LeFanu Lumsdaine]{Dalhousie University}
\email{p.l.lumsdaine@dal.ca} 

\author{Vladimir Voevodsky}
\address[Vladimir Voevodsky]{Institute for Advanced Study}
\email{vladimir@ias.edu}

\date{March 2012; revised September 2018}

\begin{abstract}
We present an accessible account of Voevodsky’s construction of a univalent universe of Kan fibrations.
\end{abstract}

\maketitle

Our goal in this note is to give a concise, self-contained account of the results of the third-named author given in \cite[Section 5]{voevodsky:notes-on-type-systems}: the construction of a homotopically universal small Kan fibration $\pi \colon \UUt \to \UU$; the proof that $\UU$ is a Kan complex; and the proof that $\pi$ is univalent.

We assume some background knowledge of the homotopy theory of simplicial sets, and category theory in general; \cite{hovey:book} and \cite{mac-lane:cwm} are canonical and sufficient references.  Other good sources include \cite{may:simplicial-book}, \cite{goerss-jardine}, and \cite{joyal:theory-of-quasi-cats}.

In Section~\ref{sec:1}, we construct $\pi \colon \UUt \to \UU$, and prove that it is a weakly universal $\alpha$-small Kan fibration.  In Section~\ref{sec:2}, we prove further that the base $\UU$ is a Kan complex.

Section~\ref{sec:3} is dedicated to constructing the fibration of weak equivalences between two fibrations over a common base.  In Section~\ref{sec:4} we define univalence for a general fibration, and prove our main theorem: that $\pi$ is univalent.  Finally, in Section~\ref{sec:5}, we derive from this a statement of “homotopical uniqueness” for the universal property of $\UU$.

Overall, we largely follow the original presentation of \cite[Section 5]{voevodsky:notes-on-type-systems}, with some departures: some proofs in Sections~\ref{sec:2} and \ref{sec:4} are simplified based on an argument of André Joyal (\cite[Lemma~0.2]{joyal:kan}, cf.~our Lemmas~\ref{lemma:exp_along_cofib},~\ref{Joyal_lemma}); and Section~\ref{sec:3} also is somewhat modified and reorganised.

A recurring theme throughout is that when a map is defined by a “right-handed” universal property, showing that it is a fibration (resp.\ trivial fibration) corresponds to showing that the objects it represents extend along trivial (resp.\ all) cofibrations.

An alternative construction of $\pi \colon \UUt \to \UU$ is sketched in \cite{streicher:univalence}, and an alternative proof of univalence in \cite{moerdijk:univalence}.

This note extracts the purely homotopy-theoretic aspects of \cite{kapulkin-lumsdaine:simplicial-model}; see the introduction of that paper for details of the background of the present work.

\subsection*{Acknowledgements} We particularly thank Michael Warren, for an illuminating series of seminars on the universe construction; Karol Szumi{\l}o, for many helpful conversations, and Mike Shulman, for catching an error in a draft of this note.  The first-named author would like to dedicate this paper to his mother.

The first-named author was financially supported during this work by the NSF, Grant DMS-1001191, and a grant from the Benter Foundation at the University of Pittsburgh; the second-named author, by an AARMS postdoctoral fellowship at Dalhousie University, and grants from NSERC.

\section{Representability of fibrations} \label{sec:1}

\begin{definition}
Let $X$ be a simplicial set. A {\em well-ordered morphism} $f \colon Y \to X$ is a pair consisting of a morphism into $X$ (also denoted by $f$) and a function assigning to each simplex $x \in X_n$ a well-ordering on the fiber $Y_x := f^{-1}(x) \subseteq Y_n$.

If $f \colon Y \to X$, $f' \colon Y' \to X$ are well-ordered morphisms into $X$, an \emph{isomorphism} of well-ordered morphisms from $f$ to $f'$ is an isomorphism $Y \iso Y'$ over $X$ preserving the well-orderings on the fibers.
\end{definition}

\begin{remark}
Since we require no compatibility conditions, there are infinitely many (specifically, $2^{\omega}$) well-orderings even on the map $1 \amalg 1 \to 1$.  The well-orderings are haphazard beasts, and not of intrinsic interest; they are essentially just a technical device to obtain Lemma \ref{lemma:w-preserves-lims}.
\end{remark}

\begin{proposition} \label{prop:well-orderings-rigid}
Given two well-ordered sets, there is at most one isomorphism between them.  Given two well-ordered morphisms over a common base, there is at most one isomorphism between them.
\end{proposition}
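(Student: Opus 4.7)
The plan is to prove the two statements separately, with the second essentially a fibrewise application of the first. I would start by recalling that an \emph{isomorphism} of well-ordered sets $(A, <_A)$ and $(B, <_B)$ is a bijection preserving the order in both directions, so in particular it restricts to an isomorphism between any initial segment and its image.

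For the first statement, suppose $\varphi, \psi \colon (A, <_A) \to (B, <_B)$ are two isomorphisms of well-ordered sets. I would prove $\varphi(a) = \psi(a)$ for every $a \in A$ by transfinite induction on $a$. The inductive hypothesis gives $\varphi(a') = \psi(a')$ for all $a' <_A a$; since $\varphi$ is an order-isomorphism, $\varphi(a)$ is precisely the $<_B$-least element of $B$ not lying in the set $\{\varphi(a') \mid a' <_A a\}$, and the same characterisation holds for $\psi(a)$. Since the two sets coincide by induction, $\varphi(a) = \psi(a)$.

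For the second statement, suppose $\Phi, \Psi \colon Y \to Y'$ are two isomorphisms of well-ordered morphisms from $f \colon Y \to X$ to $f' \colon Y' \to X$. For each $n$ and each $n$-simplex $x \in X_n$, both $\Phi$ and $\Psi$ restrict on $n$-simplices to isomorphisms of well-ordered sets $Y_x \to Y'_x$, and hence agree on $Y_x$ by the first statement. Since the fibres $Y_x$ cover $Y_n$ as $x$ ranges over $X_n$, we conclude $\Phi_n = \Psi_n$ for every $n$, hence $\Phi = \Psi$.

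The main thing to get right is just that the transfinite induction in the first part is valid, which is exactly the well-foundedness of $<_A$; no real obstacle arises. The second part is then a purely formal fibrewise reduction.
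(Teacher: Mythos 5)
Your proof is correct and follows the same route as the paper: the first statement by transfinite induction on the source (the paper simply calls it ``classical, and immediate by induction''), and the second by applying the first fiberwise. You have merely spelled out the details the paper leaves implicit.
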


\begin{proof}
The first statement is classical, and immediate by induction; the second follows from the first, applied in each fiber.
\end{proof}

\begin{definition}
Fix (once and for all) a regular cardinal $\alpha$.  Say a map $f \colon Y \to X$ is \emph{$\alpha$-small} if each of its fibers $Y_x$ has cardinality $< \alpha$.
\end{definition}

Given a simplicial set $X$ we define $\W (X)$ to be the set of isomorphism classes of $\alpha$-small well-ordered morphisms $p \colon Y \to X$. Given a morphism $f \colon X' \to X$ we define $\W (f) \colon \W (X) \to \W (X')$ by $\W(f)[p] = f^*p$ (the pullback of $p$ along $f$). This gives a contravariant functor $\W \colon \sSets^{\op} \to \Sets$.

\begin{lemma} \label{lemma:w-preserves-lims}
$\W$ preserves limits; i.e., sends colimits in $\sSets$ to limits in $\Sets$.
\end{lemma}

\begin{proof}
Suppose $F \colon \I \to \sSets$ is some diagram, and $X = \colim_\I F$ is its colimit, with canonical co-cone $\nu_i \colon F(i) \to X$.  We need to show that the canonical map $\W(X) \to \lim_{{\I}^{\op}} \W(F(i))$ is an isomorphism.

To see that it is surjective, suppose we are given $[f_i \colon Y_i \to F(i)] \in \lim_\I \W(F(i))$.  For each $x \in X_n$, choose some $i$ and $\bar{x} \in F(i)$ with $\nu(\bar{x}) = x$, and set $Y_x := (Y_i)_{\bar{x}}$.  By Proposition~\ref{prop:well-orderings-rigid}, this is well-defined up to \emph{unique} isomorphism, independent of the choices of representatives $i$, $\bar{x}$, $Y_i$, $f_i$.  The total space of these fibers then defines a well-ordered morphism $f \colon Y \to X$, with fibers smaller than $\alpha$, and with pullbacks isomorphic to $f_i$ as required.

For injectivity, suppose $f, f'$ are well-ordered morphisms over $X$, and $\nu_i^* f \iso \nu_i^* f'$ for each $i$.  By Proposition~\ref{prop:well-orderings-rigid}, these isomorphisms agree on each fiber, so together give an isomorphism $f \iso f'$.
\end{proof}

Define the simplicial set $\WW$ by
\[\WW := \W \circ \y^{\op} \colon \Delta^{\op} \to \Sets,\]
where $\y$ denotes the Yoneda embedding $\Delta \to \sSets$.

\begin{lemma}
The functor $\W$ is representable, represented by $\WW$.
\end{lemma}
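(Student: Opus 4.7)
The plan is to apply the general density fact that any functor $F \colon \sSets^{\op} \to \Sets$ sending colimits in $\sSets$ to limits in $\Sets$ is representable, with representing object the simplicial set $F \circ \y^{\op}$. Lemma~\ref{lemma:w-preserves-lims} is exactly this hypothesis for $\W$, and $\WW$ is defined precisely as the candidate representing object; so there should essentially be nothing left to do but unwind the general argument in this case.

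Concretely, I would start from the standard fact that every simplicial set is the canonical colimit of its category of simplices:
\[ X \iso \colim_{(\y[n] \to X) \in \Delta/X} \y[n]. \]
Applying $\W$ and invoking Lemma~\ref{lemma:w-preserves-lims},
\[ \W(X) \iso \lim_{\Delta/X} \W(\y[n]) = \lim_{\Delta/X} \WW([n]). \]
Separately, the Yoneda lemma gives $\WW([n]) \iso \Hom_{\sSets}(\y[n], \WW)$, and since $\Hom_{\sSets}(-, \WW)$ itself converts colimits into limits,
\[ \Hom_{\sSets}(X, \WW) \iso \lim_{\Delta/X} \Hom_{\sSets}(\y[n], \WW) \iso \lim_{\Delta/X} \WW([n]). \]
Composing these two isomorphisms yields the desired natural bijection $\W(X) \iso \Hom_{\sSets}(X, \WW)$.

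I anticipate no genuine obstacle, since all the substantive content has been absorbed into Lemma~\ref{lemma:w-preserves-lims}; the remaining work is purely bookkeeping, namely checking that the displayed isomorphism is natural in $X$ (which is automatic from the construction) and identifying the resulting universal element. Taking $X = \WW$, the identity map in $\Hom_{\sSets}(\WW, \WW)$ corresponds to a distinguished $\alpha$-small well-ordered morphism over $\WW$, of which every $\alpha$-small well-ordered morphism $f \colon Y \to X$ is then the pullback along a unique classifying map $X \to \WW$. Uniqueness of the classifying map is forced by Proposition~\ref{prop:well-orderings-rigid}, which is ultimately where rigidity of well-orderings earns its keep.
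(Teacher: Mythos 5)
Your proof is correct and is essentially the same argument as the paper's: write $X$ as the canonical colimit of its simplices, apply Lemma~\ref{lemma:w-preserves-lims} on one side and the continuity of $\sSets(-,\WW)$ plus Yoneda on the other, and splice the two chains of isomorphisms together. The paper presents this as a single displayed chain of six natural isomorphisms, but the content is identical.
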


\begin{proof}
Given $X \in \sSets$, we have isomorphisms, natural in $X$:
\begin{equation*} \begin{split}
  \W (X) & \cong \W (\colim_{\int X} \Delta [n]) \\
         & \cong \textstyle \lim_{\int X} \W (\Delta[n]) \\
         & \cong \textstyle \lim_{\int X} (\WW)_n \\
         & \cong \textstyle \lim_{\int X} \sSets (\Delta [n], \WW) \\
         & \cong \sSets (\colim_{\int X} \Delta [n], \WW) \\
         & \cong \sSets (X, \WW). \qedhere
\end{split}\end{equation*}
\end{proof}

\begin{notation}
Given an $\alpha$-small well-ordered map $f \colon Y \to X \in \W (X)$, the corresponding map $X \to \WW$ will be denoted by $\name{f}$.
\end{notation}

Applying the natural isomorphism above to the identity map $\WW \to \WW$ gives a universal $\alpha$-small well-ordered simplicial set $\WWt \to \WW$.  Explicitly, $n$-simplices of $\WWt$ are pairs
\[(f \colon Y \to \Delta [n], s \in f^{-1}(1_{[n]}))\]
i.e.\ the fiber of $\WWt$ over an $n$-simplex $\name{f} \in \WW$ is exactly (an isomorphic copy of) the main fiber of $f$.  So, by construction:

\begin{proposition}
The canonical projection $\WWt \to \WW$ is universal for $\alpha$-small well-ordered morphisms.
\end{proposition}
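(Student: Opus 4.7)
The plan is a direct application of the Yoneda lemma: representability of a functor $F$ by an object $R$ is equivalent to the existence of a \emph{universal element} $u \in F(R)$ such that every element $\xi \in F(X)$ equals $F(g)(u)$ for a unique $g \colon X \to R$. Since $\W$ is representable by $\WW$ and, for a morphism $g \colon X \to Y$, the action $\W(g)$ is pullback of well-ordered morphisms, this abstract principle specialises to exactly the universality statement we want.

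Concretely, I would proceed as follows. Let $\Phi_X \colon \W(X) \xrightarrow{\sim} \sSets(X,\WW)$ denote the representability isomorphism constructed in the previous lemma, natural in $X$. By definition, $\WWt \to \WW$ is the $\alpha$-small well-ordered morphism $\Phi_{\WW}^{-1}(\mathrm{id}_{\WW})$. Now given any $\alpha$-small well-ordered morphism $f \colon Y \to X$ with classifying map $\name{f} = \Phi_X([f]) \colon X \to \WW$, naturality of $\Phi$ applied to $\name{f}$ gives the commutative square relating $\W(\name{f})$ to $\sSets(\name{f},\WW)$, so
\[
[f] \;=\; \Phi_X^{-1}(\name{f}) \;=\; \Phi_X^{-1}\bigl(\sSets(\name{f},\WW)(\mathrm{id}_{\WW})\bigr) \;=\; \W(\name{f})\bigl([\WWt \to \WW]\bigr) \;=\; [\name{f}^*\WWt \to X].
\]
Thus $f$ is isomorphic, as a well-ordered morphism over $X$, to the pullback of $\WWt \to \WW$ along $\name{f}$. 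Uniqueness of the classifying map $\name{f}$ is immediate from $\Phi_X$ being a bijection, and uniqueness of the mediating isomorphism over $X$ is supplied by Proposition~\ref{prop:well-orderings-rigid}.

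There is no real obstacle here: the argument is essentially tautological once the representability isomorphism has been constructed, and the only thing that needs to be verified is that the ``canonical projection'' named in the statement is indeed the well-ordered morphism corresponding to $\mathrm{id}_{\WW}$, which is precisely how $\WWt \to \WW$ was introduced in the paragraph preceding the proposition. If desired, one can additionally unwind the explicit description of $n$-simplices of $\WWt$ to exhibit the isomorphism $Y \iso \name{f}^*\WWt$ pointwise: a simplex $y \in Y_n$ over $x \in X_n$ is sent to the pair $(x^* f, y)$, where $x^* f$ is the pullback of $f$ along $x \colon \Delta[n] \to X$ and $y$ is viewed as an element of the main fiber of $x^*f$.
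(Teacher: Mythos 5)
Your argument is correct and is exactly what the paper means by ``by construction'': you unfold the Yoneda-lemma content of the representability isomorphism from the preceding lemma, identify $\WWt \to \WW$ with $\Phi_{\WW}^{-1}(\mathrm{id}_{\WW})$, and note that naturality forces every $\alpha$-small well-ordered morphism to be the pullback of $\WWt \to \WW$ along a unique classifying map, with uniqueness of the mediating isomorphism supplied by Proposition~\ref{prop:well-orderings-rigid}. This matches the paper's (tacit) proof; the only presentational difference is that the paper leaves the Yoneda step implicit.
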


\begin{corollary}
The canonical projection $\WWt \to \WW$ is weakly universal for $\alpha$-small morphisms of simplicial sets; that is, any such morphism can be given (not necessarily uniquely) as a pullback of the projection.
\end{corollary}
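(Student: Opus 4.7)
The plan is to reduce the corollary to the preceding proposition by exhibiting, for any $\alpha$-small morphism $f \colon Y \to X$, some well-ordering of its fibers that turns it into an $\alpha$-small well-ordered morphism. Once we have such a well-ordered enhancement $\tilde{f}$, the previous proposition hands us a classifying map $\name{\tilde{f}} \colon X \to \WW$ together with a pullback square exhibiting $Y \to X$ as the pullback of $\WWt \to \WW$ along $\name{\tilde{f}}$.

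The construction of the well-ordering is where the axiom of choice enters. For each $n$ and each $x \in X_n$, the fiber $Y_x = f^{-1}(x)$ is a set of cardinality less than $\alpha$, hence in particular a set, and by the well-ordering theorem admits at least one well-ordering. Using AC, pick such a well-ordering on each fiber simultaneously, across all $n$ and all $x \in X_n$. No compatibility is required between the choices for different simplices (this is exactly the point of the remark that well-orderings are ``haphazard beasts''), so there is nothing to check beyond the bare existence of a well-ordering on each fiber. This yields a well-ordered morphism $\tilde{f}$ whose underlying map is $f$, and which is $\alpha$-small by hypothesis on $f$.

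Applying the previous proposition to $\tilde{f}$ produces the classifying map $\name{\tilde{f}} \colon X \to \WW$ and a pullback square
\[
\xymatrix{
Y \ar[r] \ar[d]_{f} \pullbackcorner & \WWt \ar[d] \\
X \ar[r]_-{\name{\tilde{f}}} & \WW
}
\]
witnessing $f$ as a pullback of the universal projection, as required. (Here the pullback corner marker is optional — I simply mean the square is a pullback.) The non-uniqueness in the statement reflects that our choice of well-orderings was arbitrary: different systems of well-orderings produce different classifying maps into $\WW$, all realising the same $f$ as a pullback.

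There is no real obstacle here; the only substantive ingredient beyond the preceding proposition is the axiom of choice, used to well-order each fiber. The proof is essentially one line once one notices that $\alpha$-smallness together with AC guarantees the existence of the required well-orderings.
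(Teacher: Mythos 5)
Your proof is correct and takes exactly the same approach as the paper: invoke the axiom of choice and the well-ordering theorem to well-order each fiber, then apply the universal property of $\WW$ from the preceding proposition. The paper states this in a single sentence, and your writeup is a faithful unpacking of it.
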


\begin{proof}
By the well-ordering principle and the axiom of choice, one can well-order the fibers, and then use the universal property of $\WW$.
\end{proof}

\begin{definition}
 Let $\U \subseteq \W$ (respectively, $\UU \subseteq \WW$) be the subobject consisting of $\alpha$-small well-ordered fibrations\footnote{Here and throughout, by ``fibration'' we always mean ``Kan fibration''.}; and define $\pi \colon \UUt \to \UU$ as the pullback:
 \[\xymatrix{ \UUt \ar[r] \ar[d]_\pi  \pb & \WWt \ar[d] \\
  \UU \ar@{^{(}->}[r] & \WW
 }\]
\end{definition}

We would like to know that $\UU$ is a representing object for fibrations; for this, we must show that fibrationhood is a local condition.

\begin{lemma}\label{U:Kan_fib}
 The map $\pi \colon \UUt \to \UU$ is a fibration.
\end{lemma}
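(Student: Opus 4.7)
The plan is to verify that $\pi$ has the right lifting property against each horn inclusion $\Lambda^n_k \hookrightarrow \Delta[n]$.  Consider a lifting problem
\[\xymatrix{\Lambda^n_k \ar[r] \ar@{^{(}->}[d] & \UUt \ar[d]^\pi \\ \Delta[n] \ar[r] & \UU.}\]
By Yoneda and the fact that $\UU$ represents the subfunctor $\U$ of $\W$, the bottom map corresponds to an $\alpha$-small well-ordered Kan fibration $f \colon Y \to \Delta[n]$ in $\U(\Delta[n])$.  The construction of $\WWt$ (together with the pullback defining $\UUt$) identifies $Y$ with $\name{f}^* \UUt$, so maps $\Delta[n] \to \UUt$ over $\name{f}$ are in natural bijection with sections of $f$; likewise, the given map $\Lambda^n_k \to \UUt$ corresponds to a section $s \colon \Lambda^n_k \to Y$ of $f|_{\Lambda^n_k}$.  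Finding a diagonal filler in the square above is thus equivalent to extending $s$ to a section of $f$ over all of $\Delta[n]$.

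The required extension exists by the lifting property of $f$ itself: in the square
\[\xymatrix{\Lambda^n_k \ar[r]^-{s} \ar@{^{(}->}[d] & Y \ar[d]^f \\ \Delta[n] \ar@{=}[r] \ar@{-->}[ur] & \Delta[n]}\]
a diagonal filler exists because $f$ is a Kan fibration and $\Lambda^n_k \hookrightarrow \Delta[n]$ is a horn inclusion.  Transporting this filler back across the correspondence above yields the desired lift $\Delta[n] \to \UUt$ in the original square.

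The only point requiring real care is the translation between maps into $\UUt$ over a classifying map and sections of the classified family; this is immediate from the pullback characterisation of $\WWt \to \WW$ (equivalently, from the explicit description of the $n$-simplices of $\WWt$ as pairs $(f, s)$ with $s$ lying in the fiber over $1_{[n]}$).  The well-ordering data play no role in producing the lift—they are only needed upstream to make the classifying map $\name{f}$ unique.  That $f$ itself is a Kan fibration, which is precisely what the passage from $\W$ to $\U$ encodes, is what allows us to conclude.
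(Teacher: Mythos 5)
Your proof is correct and takes essentially the same approach as the paper: both reduce the horn‑filling problem for $\pi$ to the horn‑filling problem for the classified fibration $f \colon Y \to \Delta[n]$, using that the top map factors through the pullback $\name{f}^*\UUt \cong Y$. Your exposition via sections is just a more explicit unwinding of the paper's observation that the outer square "factors through the pullback."
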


\begin{proof}
 Consider a horn to be filled
  \[\xymatrix{ \Lambda^k [n] \ar[r] \ar@{^{(}->}[d] & \UUt \ar[d]^\pi \\
  \Delta [n] \ar[r]_{\name{x}} & \UU
 }\]
 for some $0 \leq k \leq n$.  It factors through the pullback
   \[\xymatrix{ \Lambda^k [n] \ar[r] \ar@{^{(}->}[d] & \bullet \ar[r] \ar[d]^{x} \pb & \UUt \ar[d]^\pi \\
  \Delta [n] \ar@{=}[r] & \Delta [n] \ar[r]_{\name{x}} & \UU
 }\]
 where by the definition of $\UU$, $x$ is a fibration. Thus the left square admits a diagonal filler, and hence so does the outer rectangle.
\end{proof}

\begin{lemma}
 An $\alpha$-small well-ordered morphism $f \colon Y \to X \in \W (X)$ is a fibration if and only if $\name{f} \colon X \to \WW$ factors through $\UU$.
\end{lemma}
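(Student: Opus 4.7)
The plan is to unpack the factorization condition simplex-by-simplex and then establish each direction by a short pullback argument.

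First I would observe that since $\UU \hookrightarrow \WW$ is a monomorphism of simplicial sets, $\name{f}$ factors through $\UU$ if and only if, for every $n$ and every simplex $\sigma \colon \Delta[n] \to X$, the composite $\name{f} \circ \sigma \colon \Delta[n] \to \WW$ lands in $\UU$. By the representability of $\W$ (and the definition of $\UU \subseteq \WW$ in terms of $\U \subseteq \W$), this means precisely that for each such $\sigma$ the pullback $\sigma^* f \colon \sigma^* Y \to \Delta[n]$ is a Kan fibration. So the statement to prove is: $f$ is a fibration iff $\sigma^* f$ is a fibration for every $\sigma \colon \Delta[n] \to X$.

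The forward direction is immediate, since Kan fibrations are stable under pullback. For the converse, I would reuse the exact pullback trick from the proof of Lemma~\ref{U:Kan_fib}. Given a horn lifting problem
\[\xymatrix{ \Lambda^k[n] \ar[r] \ar@{^{(}->}[d] & Y \ar[d]^f \\ \Delta[n] \ar[r]_\sigma & X, }\]
factor it through the pullback square
\[\xymatrix{ \Lambda^k[n] \ar[r] \ar@{^{(}->}[d] & \sigma^* Y \ar[r] \ar[d]_{\sigma^*f} \pb & Y \ar[d]^f \\ \Delta[n] \ar@{=}[r] & \Delta[n] \ar[r]_\sigma & X. }\]
By hypothesis $\sigma^* f$ is a Kan fibration, so the left-hand square admits a diagonal filler; composing with $\sigma^* Y \to Y$ produces the required diagonal filler for the outer rectangle.

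I do not foresee a real obstacle here: the whole content is the bijective translation between sub-simplicial-set membership and the fibration property on pullbacks along every representable, combined with the standard pullback reduction for horn fillers. The only subtlety worth flagging explicitly is that $\U \subseteq \W$ really is a subpresheaf, i.e.\ closed under pullback in $\W$, which again follows from pullback-stability of fibrations and so needs no separate argument.
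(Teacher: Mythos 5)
Your proof is correct, and your forward direction is essentially the paper's. Your converse, however, takes a slightly different route: the paper observes that when $\name{f}$ factors through $\UU$, the map $f$ is realized as a pullback of the universal projection $\pi \colon \UUt \to \UU$, and then simply cites Lemma~\ref{U:Kan_fib} (already established) that $\pi$ is a fibration. You instead bypass $\pi$ entirely and re-run the horn-filling reduction directly on $f$: given a horn in $Y$ over a simplex $\sigma$ of $X$, pull back to $\Delta[n]$, lift against $\sigma^*f$ (a fibration by hypothesis), and compose back. Both arguments rest on the same ``pull back to a representable, lift there'' idea, so this is a cosmetic rather than deep difference. Your version is a bit more self-contained — it amounts to a direct proof that a simplicial map is a Kan fibration iff its pullback along every $\sigma \colon \Delta[n] \to X$ is — while the paper's version is more modular, reusing the already-proven fibrancy of $\pi$. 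Your remark that $\U \subseteq \W$ is a genuine subfunctor (closed under pullback, by pullback-stability of fibrations) is exactly right and is what makes $\UU$ a well-defined simplicial subset; the paper leaves this implicit in the definition.
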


\begin{proof}
 For `$\Rightarrow$', assume that $f \colon Y \to X$ is a fibration. Then the pullback of $f$ to any representable is certainly a fibration:
 \[\xymatrix{ \bullet \ar[r] \ar[d]_{x^*f} \pb & Y \ar[d]^{f} \\
 \Delta [n] \ar[r]_x & X.
 }\]
 so $\name{f}(x) = x^*f \in \UU$, and hence $\name{f}$ factors through $\UU$.

 Conversely, suppose $\name{f}$ factors through $\UU$. Then we obtain:
 \[\xymatrix{ Y \ar[r] \ar[d]_{f} & \UUt \ar[r] \ar[d]^\pi \pb & \WWt \ar[d] \\
 X \ar[r] & \UU \ar@{^{(}->}[r] & \WW,
 }\]
 where the lower composite is $\name{f}$, and the outer rectangle and the right square are pullbacks.  Hence so is the left square, so by Lemma~\ref{U:Kan_fib} $f$ is a fibration.
\end{proof}

As an immediate consequence we obtain the following corollary.

\begin{corollary} \label{cor:U_classifies}
The functor $\U$ is representable, represented by $\UU$.  The map $\pi \colon \UUt \to \UU$ is universal for $\alpha$-small well-ordered fibrations, and weakly universal for $\alpha$-small fibrations.
\end{corollary}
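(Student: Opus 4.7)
The plan is to extract all three statements of the corollary directly from the two lemmas just proved, combined with the representability of $\W$ by $\WW$ and the universality of $\WWt \to \WW$ established earlier.

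First I would handle representability of $\U$. By definition $\U(X) \subseteq \W(X)$ is the subset of isomorphism classes of $\alpha$-small well-ordered fibrations, and $\UU \subseteq \WW$ is by definition the corresponding subobject. The preceding lemma (characterising when $\name{f}$ factors through $\UU$) says that under the natural bijection $\W(X) \cong \sSets(X, \WW)$, an element $[f] \in \W(X)$ lies in $\U(X)$ if and only if $\name{f}$ factors through $\UU \hookrightarrow \WW$. Restricting the representing isomorphism therefore yields a natural bijection $\U(X) \cong \sSets(X, \UU)$, which is exactly the representability claim.

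Next, universality of $\pi$ for $\alpha$-small well-ordered fibrations. Given any such $f \colon Y \to X$, the previous lemma gives a (unique) factorisation $\name{f} \colon X \to \UU \hookrightarrow \WW$. Pasting pullbacks as in the last diagram of the preceding proof, the outer rectangle exhibits $f$ as a pullback of $\WWt \to \WW$, and the right-hand square is a pullback by definition of $\UUt$; hence the left-hand square is a pullback, exhibiting $f$ as the pullback of $\pi$ along $\name{f}$. Uniqueness of this presentation follows from universality of $\WWt \to \WW$ (already established), since any classifying map $X \to \UU$ for $f$ composes with the inclusion into $\WW$ to give the unique classifying map $\name{f}$ for $f$ as a well-ordered morphism.

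Finally, weak universality for arbitrary $\alpha$-small fibrations follows by exactly the same argument as for $\WWt \to \WW$: given an $\alpha$-small fibration $f \colon Y \to X$, use the well-ordering principle and the axiom of choice to equip the fibers with well-orderings, then apply the just-proved universality for well-ordered fibrations. The only mild subtlety — and really the only non-bookkeeping step — is checking that the two pullbacks in the diagram fit together so that the left square is indeed a pullback; this is a standard pasting lemma and should pose no obstacle. There is no serious difficulty here: the whole corollary is essentially a formal consequence of the preceding results, which is why the authors present it without a proof.
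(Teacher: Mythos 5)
Your argument is correct and is exactly the unpacking implicit in the paper's phrase ``as an immediate consequence'' — restricting the representing isomorphism for $\W$ via the factorisation lemma, pasting pullback squares for strict universality, and invoking choice for weak universality. The paper offers no proof because the result is immediate; your reconstruction matches the intended reasoning.
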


\section{Fibrancy of \protect{$\UU$}}  \label{sec:2}

Our next goal is to prove the following theorem.

\begin{theorem}\label{U:KanCpx}
 The simplicial set $\UU$ is a Kan complex.
\end{theorem}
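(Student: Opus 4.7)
My plan is to translate the Kan condition for $\UU$ into an extension problem for fibrations, and then solve that extension problem using the trivial-cofibration nature of horn inclusions together with one of the paper's Joyal-style lemmas. By Corollary~\ref{cor:U_classifies}, a horn $h : \Lambda^k[n] \to \UU$ corresponds to an $\alpha$-small well-ordered fibration $f : Y \to \Lambda^k[n]$, and a filler $\Delta[n] \to \UU$ extending $h$ corresponds to an $\alpha$-small well-ordered fibration $\tilde f : \tilde Y \to \Delta[n]$ whose pullback along $\Lambda^k[n] \hookrightarrow \Delta[n]$ is $f$ (as a well-ordered morphism). By Proposition~\ref{prop:well-orderings-rigid} and the axiom of choice, well-orderings on the new fibers (those over simplices of $\Delta[n] \setminus \Lambda^k[n]$) can be freely supplied. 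So the problem reduces to the purely homotopical claim that every $\alpha$-small fibration over $\Lambda^k[n]$ extends along the horn inclusion to an $\alpha$-small fibration over $\Delta[n]$.

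For the extension itself, my approach is to exploit that $\Lambda^k[n] \hookrightarrow \Delta[n]$ is a trivial cofibration. Following the introduction's heuristic that ``right-handed'' universal constructions extend along trivial cofibrations, I expect to invoke Lemma~\ref{lemma:exp_along_cofib} or Lemma~\ref{Joyal_lemma} together with the fibrancy of $\pi : \UUt \to \UU$ already established in Lemma~\ref{U:Kan_fib}. A natural first step is to factor the composite $Y \to \Lambda^k[n] \hookrightarrow \Delta[n]$ as a trivial cofibration $Y \hookrightarrow Y'$ followed by a fibration $q : Y' \twoheadrightarrow \Delta[n]$; by right-properness of $\sSets$, the pullback $\Lambda^k[n] \times_{\Delta[n]} Y'$ receives $Y$ by a weak equivalence of fibrations over the horn. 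This furnishes an extension ``up to weak equivalence''.

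The main obstacle is to upgrade this to an extension whose pullback is literally $f$, as the strict horn-filling condition demands. I expect Joyal's lemma to provide exactly this rigidification: since the horn inclusion is anodyne and $\pi$ is a fibration, the appropriate restriction map between ``spaces of fibrations'' over $\Delta[n]$ and over $\Lambda^k[n]$ is a trivial fibration (rather than merely a weak equivalence), and any weak-restriction extension can be replaced by a strict one. This yields a fibration $\tilde f : \tilde Y \to \Delta[n]$ with $\Lambda^k[n] \times_{\Delta[n]} \tilde Y \cong Y$ on the nose; equipping $\tilde Y$ with arbitrary well-orderings on its new fibers and forming the classifying map $\name{\tilde f} : \Delta[n] \to \UU$ then gives the required horn filler.
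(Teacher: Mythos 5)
Your reduction of the horn-filling condition for $\UU$ to the following extension problem is exactly right, and matches the paper: given an $\alpha$-small fibration $f \colon Y \to \Lambda^k[n]$, produce an $\alpha$-small fibration $\tilde{f} \colon \tilde{Y} \to \Delta[n]$ whose strict pullback along the horn inclusion is $f$, then choose well-orderings extending those of $f$. The difficulty lies entirely in the second half, and there your proposal has real gaps.

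First, your plan of factoring $Y \to \Lambda^k[n] \hookrightarrow \Delta[n]$ as a trivial cofibration followed by a fibration $Y' \to \Delta[n]$, and then arguing by right-properness that $\Lambda^k[n] \times_{\Delta[n]} Y'$ is weakly equivalent to $Y$, only yields an extension \emph{up to weak equivalence}, and you correctly identify this as the obstacle. But the ``rigidification'' you propose does not work. Lemma~\ref{Joyal_lemma} extends a \emph{trivial fibration} along a cofibration; it does not take a fibration over $\Delta[n]$ that weakly restricts to $Y$ and turn it into one that strictly restricts to $Y$. That is a genuinely different kind of statement, and nothing in the cited lemmas provides it. Moreover, your appeal to ``the appropriate restriction map between spaces of fibrations over $\Delta[n]$ and over $\Lambda^k[n]$ being a trivial fibration'' is circular: that restriction map is (up to the coding of $\UU$) exactly the map $\UU^{\Delta[n]} \to \UU^{\Lambda^k[n]}$, and its surjectivity is precisely what fibrancy of $\UU$ asserts — i.e.\ the theorem you are trying to prove. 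Finally, a generic (trivial cofibration, fibration) factorization via the small object argument gives no control over fiber sizes, so you would still owe an argument that $\tilde{Y}$ can be taken $\alpha$-small; the paper has to work to keep track of this.

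The paper's actual proof supplies the missing idea: use Quillen's Lemma (Lemma~\ref{Quillen_lemma}) to factor $f$ as a trivial fibration $q_t \colon Y \to Y_0$ followed by a \emph{minimal} fibration $q_m \colon Y_0 \to \Lambda^k[n]$. Since $\Lambda^k[n]$ is contractible, Lemma~\ref{May_lemma} gives $Y_0 \iso F \times \Lambda^k[n]$, which extends on the nose to the product fibration $F \times \Delta[n]$ — this is what gives a \emph{strict} extension of the base of the tower, not just a weak one. Then Lemma~\ref{Joyal_lemma} extends the trivial fibration $q_t$ along the cofibration $Y_0 \hookrightarrow F \times \Delta[n]$, again on the nose. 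Each step manifestly preserves $\alpha$-smallness (fibers of $q_t$ are subsets, fibers of $q_m$ are quotients, of fibers of $f$; $i_*$ preserves smallness by Lemma~\ref{lemma:exp_along_cofib}), and regularity of $\alpha$ closes the argument for the composite. Without the minimal-fibration decomposition, there is no route in the paper's toolkit from a weak extension to a strict one.
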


Before proceeding with the proof we will gather four useful lemmas.  The first two, on the theory of \emph{minimal fibrations}, come originally from \cite{quillen:minimal} and \cite{barratt-gugenheim-moore}.  Since these two lemmas contain all that we need to know about minimal fibrations, we treat the notion as a black box, and refer the interested reader to \cite{may:simplicial-book} for more.

\begin{lemma}[Quillen's Lemma, {\cite{quillen:minimal}}] \label{Quillen_lemma}
Any fibration $f \colon Y \to X$ may be factored as $f = pg$, where $p$ is a minimal fibration and $g$ is a trivial fibration.
\end{lemma}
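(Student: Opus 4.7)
The plan is to produce the factorization $Y \xrightarrow{g} M \xrightarrow{p} X$ by finding a subcomplex $M \subseteq Y$ that is simultaneously a minimal fibration $p$ over $X$ and a fibrewise strong deformation retract of $Y$, with the retraction serving as $g$.

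First I would set up the relevant equivalence relation on simplices of $Y$. For $\sigma, \tau \in Y_n$ with $\partial\sigma = \partial\tau$ and $f\sigma = f\tau$, declare $\sigma \sim \tau$ iff there is a homotopy $\Delta[n] \times \Delta[1] \to Y$ from $\sigma$ to $\tau$, constant on $\partial\Delta[n]$ and with $f$-image constant in the $\Delta[1]$-direction. Reflexivity and symmetry are immediate; transitivity follows from filling a $\Lambda^1[2]$-horn fibrewise using Kan-ness of $f$. I would then construct $M \subseteq Y$ by transfinite induction on dimension together with the Axiom of Choice: having built $M_{<n}$ as a subcomplex, for each compatible pair of a boundary $\partial\Delta[n] \to M$ and a simplex $x \in X_n$, choose one representative from each $\sim$-class of simplices $\sigma \in Y_n$ having that boundary and $f\sigma = x$, forcing $s_i\tau$ as the choice whenever the class contains such a degeneracy of some $\tau \in M_{n-1}$. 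Since $\sim$ preserves boundaries and degeneracies are included by construction, $M$ is a genuine subcomplex.

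Verifying that $p \colon M \to X$ is a minimal fibration is then routine: a horn in $M$ lifts to $Y$ via the Kan property of $f$, after which one uses a fibrewise $\sim$-homotopy to replace any faces outside $M$ by their chosen $M$-representatives and $\sim$-swaps the resulting filler for its own $M$-representative; minimality is automatic from the one-representative-per-class construction. To obtain $g$, I would build a fibrewise strong deformation retraction $H \colon Y \times \Delta[1] \to Y$ over $X$ with $H_0 = \mathrm{id}_Y$, $H_1$ factoring through $M$, and $H$ constant on $M \times \Delta[1]$, by skeletal induction on non-degenerate simplices of $Y \setminus M$: at each step the relevant inclusion $(\Delta[n] \times \{0\}) \cup (\partial\Delta[n] \times \Delta[1]) \hookrightarrow \Delta[n] \times \Delta[1]$ is anodyne, so it fills via $f$, followed by a further fibrewise $\sim$-deformation to land the time-$1$ face in the chosen $M$-representative. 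Setting $g := H_1$ yields the retraction; that $g$ is a trivial fibration follows from a second skeletal induction solving any lifting problem against a cofibration, using $H$ to modify a provisional $Y$-lift into one compatible with the prescribed map into $M$, and invoking minimality of $p$ to pin down the rel-boundary $\sim$-class.

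The main obstacle will be the coherence of these inductive constructions: the selected $\sim$-representatives must assemble into an honest subcomplex (closed consistently under faces and degeneracies), and the piecewise-defined homotopies must glue into genuine simplicial maps. Each individual filling is routine by Kan-ness of $f$; the real subtlety lies in bookkeeping across dimensions and across face/degeneracy relations.
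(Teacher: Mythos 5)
The paper does not prove this lemma: it is cited to Quillen and explicitly treated as a black box, along with Lemma~\ref{May_lemma}, as the two facts about minimal fibrations that the paper imports from the literature. Your sketch is the standard proof, due to Quillen and written out in May's book and in Goerss--Jardine: define fiberwise homotopy rel boundary; choose one representative per class, forcing degeneracies, to build a subcomplex $M$; show $p \colon M \to X$ is a minimal fibration; build a fiberwise strong deformation retraction $H$ of $Y$ onto $M$ over $X$ by anodyne-extension induction; take $g = H_1$ and show it is a trivial fibration. This outline is correct and is the route the cited source takes.

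That said, the two points you wave at as ``bookkeeping'' are precisely where the real content lives, and neither is automatic. First, the ``force $s_i\tau$ as the choice'' rule is consistent only because of a nontrivial lemma: if $s_i\tau \sim s_j\tau'$ with $\tau,\tau' \in M$ then $i=j$ and $\tau=\tau'$, so each $\sim$-class contains at most one degeneracy of a lower $M$-simplex. This needs the Eilenberg--Zilber lemma together with an induction on degree, and without it the construction of $M$ as an honest subcomplex does not go through. Second, ``invoking minimality of $p$ to pin down the rel-boundary $\sim$-class'' compresses the argument that $g$ is a fibration; the cleanest route is to prove $g$ is a Kan fibration by a lifting argument against horns (using $H$ and minimality of $M$), and then observe that a fibration which is a fiberwise strong deformation retraction is a trivial fibration. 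As a proposal the plan is faithful to the classical proof, but these two steps should be flagged as the load-bearing lemmas rather than coherence chores.
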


\begin{lemma}[{\cite[III.5.6]{barratt-gugenheim-moore}}; see also {\cite[Cor.~11.7]{may:simplicial-book}}] \label{May_lemma}
Suppose $X$ is contractible, with $x_0 \in X$, and $p \colon Y \to X$ is a minimal fibration with fiber $F := Y_{x_0}$. Then there is an isomorphism
\[\xymatrix@C=0.5cm{
  Y \ar[rr]^<>(0.5)g \ar[rd]_p & & F \times X \ar[ld]^{\pi_2} \\
  & X &
}\]
over $X$.
\end{lemma}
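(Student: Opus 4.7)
The plan is to combine a covering-homotopy argument for Kan fibrations with the rigidity property that characterises minimal fibrations. Since $X$ is a contractible Kan complex, the identity $\mathrm{id}_X$ is simplicially homotopic to the constant map $c_{x_0}$; pick such a homotopy $h \colon X \times \Delta[1] \to X$ with $h_0 = c_{x_0}$ and $h_1 = \mathrm{id}_X$. Form the pulled-back fibration $h^{*}p \colon h^{*}Y \to X \times \Delta[1]$, whose restriction over $X \times \{0\}$ is canonically the projection $\pi_2 \colon F \times X \to X$ and whose restriction over $X \times \{1\}$ is $p$ itself.

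Next, I would apply the covering homotopy property for Kan fibrations to produce a fibrewise map $g \colon F \times X \to Y$ over $X$; concretely, one lifts the canonical inclusion of $F \times X \iso c_{x_0}^{*}Y$ into $h^{*}Y$ across the cylinder and reads off the restriction over $X \times \{1\}$. A symmetric construction using the reversed homotopy produces a candidate inverse $g' \colon Y \to F \times X$, and the compositions $g \circ g'$ and $g' \circ g$ are fibrewise homotopic to the respective identities, by concatenating further lifts over $\Delta[1] \ast \Delta[1]$. Thus $g$ and $g'$ realise $F \times X$ and $Y$ as fibrewise homotopy equivalent over $X$.

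The final step is to promote this fibrewise homotopy equivalence to an isomorphism. Two ingredients from the minimal fibration black box are needed here: first, that $F$ is itself a minimal Kan complex (as the fibre of a minimal fibration), so that $\pi_2 \colon F \times X \to X$ is also a minimal fibration; and second, the rigidity statement that any fibrewise homotopy equivalence between minimal fibrations over a common base is in fact an isomorphism. Applied to $g$, this yields the desired trivialisation $Y \iso F \times X$ over $X$.

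The main obstacle is the rigidity statement invoked in the last step; it is the genuinely non-formal content of the lemma and the very reason minimal fibrations are introduced. Contractibility of $X$ and fibrancy of $p$ alone only deliver a fibrewise homotopy equivalence, never a strict isomorphism, so without minimality the comparison map $g$ cannot be forced to invert $g'$ on the nose.
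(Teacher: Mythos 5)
The paper does not prove this lemma at all: it is cited verbatim from Barratt--Gugenheim--Moore and May, and the paper explicitly says it treats minimal fibrations ``as a black box.'' So there is no paper proof to compare against; I can only assess your sketch on its own terms.

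Your argument is the standard one and is essentially correct. Contractibility of $X$ gives a homotopy $h$ from $c_{x_0}$ to $\mathrm{id}_X$; pulling $p$ back along $h$ and using the covering homotopy property (lifting against the anodyne inclusion $F \times X \times \{0\} \hookrightarrow F \times X \times \Delta[1]$ into $h^* Y \to X \times \Delta[1]$) produces $g \colon F \times X \to Y$ over $X$, and symmetrically $g'$. Concatenation of $h$ with its reverse is fibrewise homotopic rel endpoints to the constant homotopy, and lifting those 2-cells gives that $g g'$ and $g' g$ are fibrewise homotopic to the identities. You correctly identify the two facts needed to close the argument: that the fibre $F$ of a minimal fibration is a minimal Kan complex, so $\pi_2 \colon F \times X \to X$ is itself a minimal fibration; and the rigidity theorem that a fibrewise homotopy equivalence between minimal fibrations over a common base is an isomorphism (May, \emph{Simplicial Objects}, roughly Theorem~11.4/Corollary~11.6). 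This second fact is the genuine content and is rightly flagged as the irreducible ingredient.

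One small imprecision: the ``further lifts over $\Delta[1] \ast \Delta[1]$'' should not be a join (which is $\Delta[3]$); the 2-cell witnessing that $\bar h * h$ is homotopic rel endpoints to the constant homotopy lives over something like $\Delta[2]$ or $\Delta[1] \times \Delta[1]$, and the relevant lift is against the corresponding anodyne horn/prism inclusion. This does not affect the correctness of the argument, only the bookkeeping.
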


For the last outstanding lemma, the proof we give is due to André Joyal, somewhat simpler than the original proof in \cite{voevodsky:notes-on-type-systems}. We include details here since the original \cite{joyal:kan} is not currently publicly available.  For this, and again for Theorem~\ref{thm:univalence} below, we make crucial use of exponentiation along cofibrations; so we pause first to establish some facts about this.

\begin{lemma}[Cf.~{\cite[Lemma~0.2]{joyal:kan}}] \label{lemma:exp_along_cofib}
Suppose $i \colon A \to B$ is a cofibration.  Let $i_*$ and $i_!$ denote respectively the right and the left adjoint to the pullback functor $i^* \colon \sSets/B \to \sSets/A$.  Then:
\begin{enumerate}[1.]
\item $i_* \colon \sSets/A \to \sSets/B$ preserves trivial fibrations;
\item $i_* \colon \sSets/A \to \sSets/B$ preserves trivially fibrant objects, i.e.\ sends trivial fibrations into $A$ to trivial fibrations into $B$;
\item the counit $i^* i_* \to 1_{\sSets/A}$ is an isomorphism;
\item if $p \colon E \to A$ is $\alpha$-small, then so is $i_* p$. \end{enumerate}
\end{lemma}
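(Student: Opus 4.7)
The plan is to treat the three claims separately; all three rest on the observation that cofibrations in $\sSets$ are precisely monomorphisms, so $i$ is mono. For (2), I would first argue that $i_!$ is fully faithful: given $q \colon F \to A$, the object $i_! q$ is just the composite $iq \colon F \to B$, and since $i$ is mono the square
\[ \xymatrix{ F \ar[r]^{q} \ar[d]_{q} & A \ar[d]^{i} \\ F \ar[r]_{iq} & B } \]
is a pullback, exhibiting $i^* i_! q \cong q$ naturally in $q$. Then for any $p \colon E \to A$ and any $q \colon F \to A$, the chain
\[ \Hom_{\sSets/A}(q, i^* i_* p) \cong \Hom_{\sSets/B}(i_! q, i_* p) \cong \Hom_{\sSets/A}(i^* i_! q, p) \cong \Hom_{\sSets/A}(q, p) \]
together with the Yoneda lemma gives $i^* i_* p \cong p$, which is (2).

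For (1), I would transpose across the adjunction $i^* \dashv i_*$. Assume $p$ is a trivial fibration, and consider any lifting problem against $i_* p$ of the form
\[ \xymatrix{ C \ar[r] \ar@{^{(}->}[d]_{j} & i_* E \ar[d]^{i_* p} \\ D \ar[r] & B } \]
with $j$ a cofibration. The transpose is a square
\[ \xymatrix{ i^* C \ar[r] \ar[d]_{i^* j} & E \ar[d]^{p} \\ i^* D \ar[r] & A } \]
in which $i^* j$, being the pullback of a monomorphism, is itself a monomorphism and hence a cofibration. Since $p$ is a trivial fibration, a diagonal lift exists; transposing back solves the original problem.

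The main obstacle is (3), where one must actually understand the fibers of $i_* p$. By the slice adjunction, the fiber over $b \in B_n$ is naturally isomorphic to
\[ \Hom_{\sSets/A}\bigl(A \times_B \Delta[n] \to A,\ p\bigr). \]
Because $i$ is mono, the projection $A \times_B \Delta[n] \to \Delta[n]$ is also mono, identifying its domain with some subcomplex $K \subseteq \Delta[n]$. Such a $K$ has only finitely many non-degenerate simplices, and every simplicial map out of $K$ is determined by its values on these. Hence the set of maps $K \to E$ over $A$ injects into the finite product
\[ \prod_{\sigma} p^{-1}(\sigma_A), \]
where $\sigma$ ranges over the non-degenerate simplices of $K$ and $\sigma_A \in A$ denotes the image of $\sigma$. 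Each factor has cardinality $<\alpha$ by $\alpha$-smallness of $p$; and a finite product of cardinals below an infinite regular cardinal lies below it. So $\lvert (i_* E)_b \rvert < \alpha$, as required.
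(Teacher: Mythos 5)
Your proof follows essentially the same path as the paper's for all three parts: (1) by transposing lifting problems across $i^* \dashv i_*$ using that $i^*$ preserves cofibrations, (2) via full faithfulness of $i_!$ (i.e.\ $i^* i_! \iso 1$) combined with the adjunction string, and (3) by computing the fiber $(i_*p)_x$ as a hom-set out of the subcomplex $i^*x \subseteq \Delta[n]$, which has only finitely many non-degenerate simplices. The argument is correct, merely more explicit than the paper's terse version.
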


\begin{proof} \
\begin{enumerate}[1.]
\item By adjunction, since $i^*$ preserves cofibrations, $i_*$ preserves trivial fibrations.

\item Just by part 1, plus the fact that as a right adjoint, $i_*$ preserves the terminal object.

\item Since $i$ is mono, $i^* i_! \iso 1_{\sSets/A}$; so by adjointness, $i^*i_* \iso 1_{\sSets/A}$.

\item For any $n$-simplex $x \colon \Delta[n] \to B$, we have $(i_* p)_x \iso \Hom_{\sSets/B}(i^*x,p)$.  As a subobject of $\Delta[n]$, $i^*x$ has only finitely many non-degenerate simplices, so $(i_* p)_x$ injects into a finite product of fibers of $p$ and is thus of size $< \alpha$. \qedhere
\end{enumerate} \end{proof}

\begin{lemma}[{\cite[Lemma~0.2]{joyal:kan}}] \label{Joyal_lemma}
If $t \colon Y \to X$ is a trivial fibration and $j \colon X \to X'$ is a cofibration, then there exists a trivial fibration $t' \colon Y' \to X'$ and a pullback square of the form:
 \[\xymatrix{ Y \ar@{.>}[r] \ar[d]_t \pb & Y' \ar@{.>}[d]^{t'} \\
 X \ar@{^{(}->}[r]_j & X'.
 }\]

If $t$ is $\alpha$-small, then $t'$ may be chosen to also be.
\end{lemma}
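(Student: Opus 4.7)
The natural candidate is to take $Y' := j_* Y$, with $t' \colon Y' \to X'$ the structure map of $j_* t \in \sSets/X'$. The three assertions of Lemma~\ref{lemma:exp_along_cofib} together handle all three requirements on $t'$, so the plan is essentially to read them off in turn.

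First, by Lemma~\ref{lemma:exp_along_cofib}(1), since $t$ is a trivial fibration as an object of $\sSets/X$, the pushforward $t' = j_* t$ is a trivial fibration as an object of $\sSets/X'$; that is, $t' \colon Y' \to X'$ is a trivial fibration in $\sSets$. Second, by Lemma~\ref{lemma:exp_along_cofib}(2), the counit gives a canonical isomorphism $j^* j_* t \iso t$ in $\sSets/X$, i.e.\ an isomorphism $j^*Y' \iso Y$ over $X$. Composing this inverse with the canonical map $j^*Y' \to Y'$ produces a map $Y \to Y'$ fitting into the displayed square, and since $j^*Y'$ is by construction the pullback of $t'$ along $j$, the square is a pullback. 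Third, when $t$ is $\alpha$-small, Lemma~\ref{lemma:exp_along_cofib}(3) gives that $t'$ is $\alpha$-small as well.

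There is no real obstacle: the substance of the argument has been packaged into Lemma~\ref{lemma:exp_along_cofib}, and the proof simply invokes its three clauses. If anything merits a line of comment, it is the identification in the second step of the canonical square built from the counit with an honest pullback square in $\sSets$; but this is just the observation that the pullback functor in $\sSets$ along $j$ is exactly the functor $j^* \colon \sSets/X' \to \sSets/X$, so the unit/counit statements in $\sSets/X$ translate directly into pullback statements in $\sSets$.
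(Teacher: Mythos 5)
Your proposal is correct and matches the paper's proof exactly: both take $(Y',t') := j_*(Y,t)$ and invoke the three parts of Lemma~\ref{lemma:exp_along_cofib} in turn for the trivial-fibration, pullback-square, and smallness claims. The extra sentence you add about the counit isomorphism translating into a pullback square in $\sSets$ is a harmless (and correct) elaboration of what the paper leaves implicit.
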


\begin{proof}
Take $(Y', t') := j_* (Y, t)$.  By part 2 of Lemma~\ref{lemma:exp_along_cofib}, this is a trivial fibration; by part 2, $j^*Y' \iso Y$; and by part 3, it is small.
\end{proof}

We are now ready to prove that $\UU$ is a Kan complex.

\begin{proof}[Proof of Theorem~\ref{U:KanCpx}]
We need to show that we can extend any horn in $\UU$ to a simplex:
 \[\xymatrix{ \Lambda^k[n] \ar[r] \ar@{^{(}->}[d] & \UU \\
  \Delta [n] \ar@{.>}[ur] & \\
 }\]
By Corollary~\ref{cor:U_classifies}, such a horn corresponds to an $\alpha$-small well-ordered fibration $q \colon Y \to \Lambda^k [n]$.  To extend $\name{q}$ to a simplex, we just need to construct an $\alpha$-small fibration $Y'$ over $\Delta[n]$ which restricts on the horn to $Y$:
\[\xymatrix{
  Y \ar@{.>}[r] \ar[d]_q \pb & Y' \ar@{.>}[d]^{q'} \\
 \Lambda^k[n] \ar@{^{(}->}[r] & \Delta[n].
}\]
By the axiom of choice one can then extend the well-ordering of $q$ to $q'$, so the map $\name{q'} \colon \Delta [n] \to \UU$ gives the desired simplex.

By Quillen's Lemma, we can factor $q$ as
\[\xymatrix{ Y \ar[r]^{q_t} & Y_0 \ar[r]^{q_m} & \Lambda^k[n],}\]
where $q_t$ is a trivial fibration and $q_m$ is a minimal fibration.  Both are still $\alpha$-small: each fiber of $q_t$ is a subset of a fiber of $q$, and since a trivial fibration is onto, each fiber of $q_m$ is a quotient of a fiber of $q$.

By Lemma~\ref{May_lemma}, we have an isomorphism $Y_0 \cong F \times \Lambda^k[n]$ yielding a pullback diagram
 \[\xymatrix{Y_0 \ar@{^{(}->}[r] \ar[d] \pb & F \times \Delta[n] \ar[d] \\
 \Lambda^k[n] \ar@{^{(}->}[r] & \Delta[n]
 }\]
exhibiting $Y_0 \to F \times \Delta^k[n]$ as a cofibration.  By Lemma~\ref{Joyal_lemma}, we can therefore complete the upper square in the following diagram, with both right-hand vertical maps $\alpha$-small fibrations:
 \[\xymatrix{ Y \ar[d]_{q_t} \ar[r] \pb & Y' \ar[d] \\
  Y_0 \ar@{^{(}->}[r] \ar[d]_{q_m} \pb & F \times \Delta[n] \ar[d] \\
 \Lambda^k[n] \ar@{^{(}->}[r] & \Delta[n]
 }.\]

  Since $\alpha$ is regular, the composite of the right-hand side is again $\alpha$-small; so we are done.
\end{proof}

\section{Representability of weak equivalences}  \label{sec:3}

To define univalence, we first need to construct the \emph{object of weak equivalences} between fibrations $p_1 \colon E_1 \to B$ and $p_2 \colon E_2 \to B$ over a common base.  In other words, we want an object representing the functor sending $(X,f) \in \sSets/B$ to the set $\Eq_X(f^*E_1,f^*E_2)$.  As we did for $\U$, we proceed in two steps, first exhibiting it as a subfunctor of a functor more easily seen (or already known) to be representable.

For the remainder of the section, fix fibrations $E_1$, $E_2$ as above over a base $B$. Since $\sSets$ is locally Cartesian closed, we can construct the exponential object between them:

\begin{definition}
 Let $\Hom_B (E_1, E_2) \to B$ denote the internal hom from $E_1$ to $E_2$ in $\sSets/B$.

 Then for any $X$, a map $X \to \Hom_B (E_1,E_2)$ corresponds to a map $f \colon X \to B$, together with a map $u \colon f^*E_1 \to f^*E_2$ over $X$.

 Together with the Yoneda lemma, this implies the explicit description: an $n$-simplex of $\Hom_B (E_1,E_2)$ is a pair
 \[(b \colon \Delta[n] \to B, u \colon b^* E_1 \to b^* E_2) .\]
\end{definition}

\begin{lemma} \label{lemma:HOM_is_fib}
 $\Hom_B (E_1, E_2) \to B$ is a Kan fibration.
\end{lemma}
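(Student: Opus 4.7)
The plan is to verify the right lifting property of $\Hom_B(E_1,E_2) \to B$ against horn inclusions $j \colon \Lambda^k[n] \hookrightarrow \Delta[n]$. Given a lifting problem
\[\xymatrix{ \Lambda^k[n] \ar[r] \ar@{^{(}->}[d]_j & \Hom_B(E_1,E_2) \ar[d] \\ \Delta[n] \ar[r]_b & B, }\]
I would first translate it into a problem over $\Delta[n]$ via the defining adjunction of the internal hom. The bottom map $b$ pulls $E_1$ and $E_2$ back to fibrations $b^*E_1$ and $b^*E_2$ over $\Delta[n]$, and the top map together with the square correspond by adjunction to a map $v \colon j^*b^*E_1 \to b^*E_2$ over $\Delta[n]$. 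A diagonal filler for the original square corresponds precisely to an extension of $v$ to a map $u \colon b^*E_1 \to b^*E_2$ over $\Delta[n]$.

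The task therefore reduces to finding a diagonal filler in
\[\xymatrix{ j^*b^*E_1 \ar[r]^v \ar@{^{(}->}[d] & b^*E_2 \ar[d] \\ b^*E_1 \ar@{.>}[ur] \ar[r] & \Delta[n]. }\]
The right-hand vertical is a fibration, being the pullback of $E_2 \to B$. The left-hand vertical is the pullback of the horn inclusion $j$ along the fibration $b^*E_1 \to \Delta[n]$; it is automatically a cofibration (monomorphisms being stable under pullback), and a weak equivalence by right properness of the Kan--Quillen model structure on $\sSets$. Hence it is a trivial cofibration, and the required filler exists.

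The only delicate point is the unwinding of the internal-hom adjunction in the first step; once the problem is rephrased over $\Delta[n]$, the rest is a direct appeal to right properness together with the standard lifting property of a fibration against a trivial cofibration. I do not expect any substantive obstacle.
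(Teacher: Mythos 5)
Your proof is correct, and it is essentially the explicit unwinding of the paper's one-line argument: the paper observes that $(-) \times_B E_1$ preserves trivial cofibrations (by right properness) and invokes the adjunction to conclude $\Hom_B(E_1,-)$ preserves fibrant objects, whereas you transpose a specific horn-filling problem across the internal-hom adjunction and then apply right properness directly to see the resulting left-hand leg is a trivial cofibration. Both arguments hinge on the same fact, so this is the same approach in substance, just presented at the level of lifting squares rather than via the adjunction formalism.
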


\begin{proof}
The functor $(-) \times_B E_1 \colon \sSets/B \to \sSets/B$ preserves trivial cofibrations (since $\sSets$ is right proper); so its right adjoint $\Hom_B (E_1,-)$ preserves fibrant objects.
\end{proof}

Within $\Hom_B (E_1, E_2)$, we now want to construct the subobject of weak equivalences.

\begin{lemma} \label{lemma:weqs_pull_back} 
Let $f \colon E_1 \to E_2$ be a weak equivalence over $B$, and suppose $g \colon B' \to B$. Then the induced map between pullbacks $g^*E_1 \to g^*E_2$ is a weak equivalence.
\end{lemma}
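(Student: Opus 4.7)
The plan is to reduce the statement to two stability facts: pullbacks preserve trivial fibrations (easy), and pullbacks along arbitrary maps preserve weak equivalences between fibrant objects (Ken Brown's lemma). First I would factor $f$ in $\sSets/B$ as a trivial cofibration $j \colon E_1 \to P$ followed by a fibration $q \colon P \to E_2$; by two-out-of-three, $q$ is also a weak equivalence, hence a trivial fibration. The pullback $g^* q$ is then again a trivial fibration, since trivial fibrations are characterised by the right lifting property against cofibrations and cofibrations are stable under pullback.

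It remains to show that $g^* j$ is a weak equivalence. Here the key observation is that $j$ is a weak equivalence between objects fibrant in $\sSets/B$: $E_1$ is a fibration by hypothesis, and $P \to B$ is a composite of the fibrations $q$ and $p_2$. To conclude, I would recognise $g^* \colon \sSets/B \to \sSets/B'$ as a right Quillen functor: its left adjoint $g_!$ (postcomposition with $g$) preserves monomorphisms (hence cofibrations) and weak equivalences, both of which are detected by the forgetful functor to $\sSets$ and are unaffected by reinterpreting the map as one over $B$. Ken Brown's lemma applied to $g^*$ then shows it sends weak equivalences between fibrant objects to weak equivalences, so $g^* j$ is a weak equivalence, and composing with the trivial fibration $g^* q$ gives the result.

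The main thing to watch is the fibrancy hypothesis on the source and target: pullback along an arbitrary $g$ does not preserve arbitrary weak equivalences, so the structure of $E_1$ and $E_2$ as fibrations is essential. Should one wish to avoid the Ken Brown black box, the natural hands-on substitute is to use the lifting property of $j$ against the fibration $p_1$ to produce a retraction $r \colon P \to E_1$ over $B$, and then to lift once more against a cylinder to produce an over-$B$ homotopy $jr \simeq \mathrm{id}_P$. This exhibits $j$ as a strong deformation retract in $\sSets/B$, a structure that is manifestly preserved by $g^*$ termwise, yielding the same conclusion; I would expect this concrete route to be the main technical obstacle to a fully self-contained argument.
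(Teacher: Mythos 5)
Your proof is correct, and its engine — that $g^* \colon \sSets/B \to \sSets/B'$ is right Quillen, so by Ken Brown's lemma it carries weak equivalences between fibrant objects to weak equivalences — is exactly what the paper uses. But there is a structural redundancy in your presentation: once Ken Brown's lemma is in play, it applies directly to $f$ itself, since $E_1 \to B$ and $E_2 \to B$ are fibrations and hence $E_1$, $E_2$ are fibrant in $\sSets/B$. The factorisation $f = q j$ and the separate handling of $g^*q$ therefore buy you nothing: you invoke Ken Brown anyway to deal with $g^*j$, and the lemma is no easier to apply to $j$ than to $f$. The factorisation only earns its keep in the hands-on alternative you sketch at the end — factor $f$, dispose of $g^*q$ by pullback-stability of trivial fibrations, and exhibit $j$ as a strong deformation retract over $B$ (via lifting against $p_1$ for the retraction and against a cylinder inclusion for the homotopy), a structure visibly preserved by $g^*$. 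That route is precisely the proof of Ken Brown's lemma unwound in this special case, and is a perfectly good way to make the argument self-contained. So either invoke Ken Brown directly on $f$ without factoring (the paper's one-line route), or drop Ken Brown entirely and run the deformation-retract argument to its end; as written, the factorisation is dead weight sitting between the two.
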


\begin{proof}
The pullback functor $g^* \colon \sSets/B \to \sSets/B'$ preserves trivial fibrations; so by Ken Brown's Lemma \cite[Lemma~1.1.12]{hovey:book}, it preserves all weak equivalences between fibrant objects.
\end{proof}

Thus, weak equivalences from $E_1$ to $E_2$ form a subfunctor of the functor of maps from $E_1$ to $E_2$.  To show that this is representable, we need just to show:

\begin{lemma} \label{lemma:weq_fibers}  
 Let $f \colon E_1 \to E_2$ be a morphism over $B$.  If for each simplex $b \colon \Delta[n] \to B$ the induced map $f_b \colon b^*E_1 \to b^* E_2$ is a weak equivalence, then $f$ is a weak equivalence.
\end{lemma}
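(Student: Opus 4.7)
The plan is to prove the apparently stronger statement that for every map $g \colon K \to B$ out of an arbitrary simplicial set, the pullback $g^*f \colon g^*E_1 \to g^*E_2$ is a weak equivalence. The lemma is then the case $K = B$, $g = \mathrm{id}_B$.

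My approach is induction on $\dim K$, followed by a colimit argument for the infinite-dimensional case. The base case $K = \emptyset$ is immediate. For the inductive step, with $\dim K \leq n$, present $K$ as the pushout
\[
K = K^{(n-1)} \cup_{\coprod_\alpha \partial\Delta[n]} \coprod_\alpha \Delta[n],
\]
attaching the non-degenerate $n$-simplices of $K$. Since $\sSets$ is locally Cartesian closed, pullback along $g$ preserves this pushout, so both $g^*E_1$ and $g^*E_2$ admit analogous decompositions and $g^*f$ becomes the induced map between pushouts built from three pullbacks of $f$: along $K^{(n-1)} \to B$, along $\coprod_\alpha \partial\Delta[n] \to B$, and along $\coprod_\alpha \Delta[n] \to B$. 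The first two are weak equivalences by the inductive hypothesis (both simplicial sets have dimension $< n$); the third is a weak equivalence by the hypothesis of the lemma, applied separately to each attaching simplex. The map $\coprod_\alpha \partial\Delta[n] \hookrightarrow \coprod_\alpha \Delta[n]$ is a cofibration, and cofibrations (being monomorphisms) are stable under pullback; so the gluing lemma in the left-proper model category $\sSets$ (every object is cofibrant) gives that $g^*f$ is a weak equivalence.

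For general, possibly infinite-dimensional $K$, write $K = \colim_n K^{(n)}$; pullback along $g$ preserves this colimit, and the transition maps in $g^*E_i = \colim_n (g|_{K^{(n)}})^*E_i$ are cofibrations. Each term-wise map is a weak equivalence by the finite-dimensional case, and sequential colimits of weak equivalences along cofibrations in $\sSets$ are weak equivalences; hence $g^*f$ is a weak equivalence.

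The main obstacle is organising the induction so that $\partial\Delta[n]$ is handled by the inductive hypothesis rather than the standing hypothesis of the lemma, and carefully verifying the cofibrancy conditions required by the gluing lemma and the colimit step. A secondary subtlety is that all three vertical maps in the gluing square are pullbacks of $f$ along composites with $g$, not simply along simplices of $B$; so the argument is cleanest when phrased for all $g \colon K \to B$ from the outset rather than directly for $f$ itself.
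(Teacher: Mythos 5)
Your proof is correct, but it takes a genuinely different route from the paper's. The paper proves the lemma by restricting to connected components, choosing a single vertex $b$ in each, and running the long exact sequence of homotopy groups for the two fibrations $E_i \to B$ together with the Five Lemma; your proof instead proceeds by skeletal induction, decomposing $K$ via pushouts of non-degenerate simplices and invoking the gluing (cube) lemma and stability of weak equivalences under sequential colimits along cofibrations. Both arguments are sound, but they buy slightly different things. The paper's long-exact-sequence approach really uses that $E_1, E_2 \to B$ are Kan fibrations, and in return it yields the strictly stronger Lemma~\ref{lemma:connected_weq} essentially for free: one weak equivalence of fibers per connected component already suffices. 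That strengthening is used in the proof of Corollary~\ref{cor:eq_is_fib}, so if one were to replace the paper's argument with yours, the paper would need a separate argument there. Your skeletal-induction approach, by contrast, makes no use of the hypothesis that the $E_i$ are fibrations over $B$ (only of left properness/cofibrancy and colimit-preservation of pullback), so in principle it proves a more general statement; but it cannot be sharpened to the ``one vertex per component'' form, because each stage of the induction genuinely needs the hypothesis for all simplices $\Delta[n] \to B$, not just vertices. One small cosmetic point: you should frame the pushout decomposition as $K^{(n)} = K^{(n-1)} \cup_{\coprod_\alpha \partial\Delta[n]} \coprod_\alpha \Delta[n]$, and then identify $K = K^{(n)}$ when $\dim K \le n$, to make the $n$-skeleton construction explicit; and when you apply the gluing lemma, it is worth stating explicitly which leg of each span is a cofibration (namely the one induced by $\coprod \partial\Delta[n] \hookrightarrow \coprod \Delta[n]$), as you do, since the lemma requires that specific hypothesis on both the source and target spans.
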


\begin{proof}
Without loss of generality, $B$ is connected; otherwise, apply the result over each connected component separately.  Take some vertex $b \colon \Delta[0] \to B$, and set $F_i := b^*E_i$.

 Now $\pi_0(f)$ factors as $\pi_0(E_1) \iso \pi_0(F_1) \to^{\pi_0(f_b)} \pi_0(F_2) \iso \pi_0(E_2)$, so is an isomorphism, since by hypothesis $\pi_0(f_b)$ is.  Similarly, for any vertex $e \colon \Delta[0] \to F_1$, we have by the long exact sequence for a fibration:
 \[\mathclap{\xymatrix@C=0.5cm{
 \pi_{n+1} (B, b) \ar[r] \ar[d]_{1} & \pi_{n} (F_1, e) \ar[r] \ar[d]_{\pi_n (f_b)} & \pi_{n} (E_1, e) \ar[r] \ar[d]_{\pi_n(f)} & \pi_{n} (B, b) \ar[r] \ar[d]^{1} & \pi_{n-1} (F_1, e) \ar[d]^{\pi_{n-1} (f_b)} \\
 \pi_{n+1} (B, b) \ar[r] & \pi_{n} (F_2, f(e)) \ar[r] & \pi_{n} (E_2, f(e)) \ar[r] & \pi_{n} (B, b) \ar[r]  & \pi_{n-1} (F_2, f(e))  }}\]
 Each $\pi_n(f_b)$ is an isomorphism, so by the Five Lemma, so is each $\pi_n (f)$.  Thus $f$ is a weak equivalence.
\end{proof}

\begin{definition}
 Let $\Eq_B(E_1, E_2)$ be the simplicial subset of $\Hom_B (E_1, E_2)$ consisting of the $n$-simplices of the form:
 \[ (b \colon \Delta [n] \to B, w \colon b^*E_1 \to b^* E_2)\]
 such that $w$ is a weak equivalence.  (By Lemma~\ref{lemma:weqs_pull_back}, this indeed defines a simplicial subset.)
\end{definition}

From Lemma~\ref{lemma:weq_fibers}, we immediately have:

\begin{corollary}\label{cor:weqs_representable}
 Let $(f, u) \colon X \to \Hom_B (E_1, E_2)$.  Then $u$ is a weak equivalence if and only if $(f, u)$ factors through $\Eq_B (E_1, E_2)$.

 Thus, maps $X \to \Eq_B(E_1,E_2)$ correspond to pairs of maps
 \[(f \colon X \to B, w \colon f^*E_1 \to f^* E_2),\]
 where $w$ is a weak equivalence. \qed
\end{corollary}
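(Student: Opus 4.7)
The proof plan is to deduce the corollary from the preceding two lemmas essentially by unfolding definitions; the substantive content has already been placed in Lemmas \ref{lemma:weqs_pull_back} and \ref{lemma:weq_fibers}, so what remains is largely bookkeeping. I will first verify the biconditional characterising when $(f,u)$ factors through $\Eq_B(E_1,E_2)$, and then observe that the second assertion — the description of maps $X \to \Eq_B(E_1,E_2)$ — follows by combining this with the universal property of $\Hom_B(E_1,E_2)$ spelled out in its definition.

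For the biconditional, the key routine identification is that a map $(f,u) \colon X \to \Hom_B(E_1,E_2)$ factors through the simplicial subset $\Eq_B(E_1,E_2)$ if and only if every simplex in its image lies in $\Eq_B(E_1,E_2)$. Unpacking, each $n$-simplex $x \colon \Delta[n] \to X$ is sent to the pair $(f \circ x,\, x^*u)$, which lies in $\Eq_B(E_1,E_2)$ precisely when $x^*u$ is a weak equivalence. Thus factoring through $\Eq_B(E_1,E_2)$ amounts exactly to the condition that $x^*u$ is a weak equivalence for every simplex $x$ of $X$.

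Now if $u$ is a weak equivalence over $X$, then Lemma \ref{lemma:weqs_pull_back}, applied with $X$ playing the role of base and $f^*E_1, f^*E_2 \to X$ as the two fibrations, shows that each pullback $x^*u$ is again a weak equivalence, so $(f,u)$ factors through $\Eq_B(E_1,E_2)$. Conversely, if $(f,u)$ factors through $\Eq_B(E_1,E_2)$, then every $x^*u$ is a weak equivalence, and Lemma \ref{lemma:weq_fibers}, again reinterpreted with $X$ as base, concludes that $u$ itself is a weak equivalence. The second assertion of the corollary then follows by restricting the bijection between maps $X \to \Hom_B(E_1,E_2)$ and pairs $(f,u)$ to those pairs for which $u$ satisfies this equivalent condition.

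I anticipate no real obstacle; the only mild care needed is the correct reindexing of Lemmas \ref{lemma:weqs_pull_back} and \ref{lemma:weq_fibers} so that $X$ — rather than the original base $B$ — plays the role of the base for the pulled-back fibrations $f^*E_1$ and $f^*E_2$.
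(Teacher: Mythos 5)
Your proof is correct and matches the paper's approach: the paper treats the corollary as immediate from Lemma~\ref{lemma:weq_fibers} (with Lemma~\ref{lemma:weqs_pull_back} already invoked when justifying that $\Eq_B(E_1,E_2)$ is a simplicial subset), and your unpacking — reducing ``$(f,u)$ factors through $\Eq_B(E_1,E_2)$'' to ``$x^*u$ is a weak equivalence for every simplex $x$ of $X$'', then using Lemma~\ref{lemma:weqs_pull_back} for one direction and Lemma~\ref{lemma:weq_fibers} (over base $X$) for the other — is precisely the intended argument.
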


While Lemma~\ref{lemma:weq_fibers} was stated just as required by representability, its proof actually gives a slightly stronger statement:

\begin{lemma} \label{lemma:connected_weq}  
 Let $f \colon E_1 \to E_2$ be a morphism over $B$.  If for some vertex $b \colon \Delta [0] \to B$ in each connected component the map of fibers $f_b \colon b^*E_1 \to b^* E_2$ is a weak equivalence, then $f$ is a weak equivalence. \qed
\end{lemma}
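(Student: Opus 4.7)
The plan is to observe that the proof already given for Lemma~\ref{lemma:weq_fibers} actually establishes this stronger statement, without any modification.

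Reading back that proof: one first reduces to the case where $B$ is connected (by treating each connected component separately), then picks a single vertex $b \colon \Delta[0] \to B$, sets $F_i := b^* E_i$, and applies the Five Lemma to the long exact sequences of homotopy groups for the fibrations $E_i \to B$, with basepoints taken in $F_i$. The only input is that $\pi_n(f_b)$ is an isomorphism for all $n \geq 0$ — i.e., that $f_b$ is a weak equivalence at that one chosen vertex. Since $B$ is path-connected, the map $\pi_0(F_i) \to \pi_0(E_i)$ arising from the fibration sequence is surjective, so every path-component of $E_i$ is represented by a basepoint lying in $F_i$; hence checking $\pi_n$-isomorphism at basepoints in $F_1$ already suffices to conclude that $f$ is a weak equivalence.

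So my approach is simply to run the argument of Lemma~\ref{lemma:weq_fibers} separately in each connected component of $B$, using the vertex $b$ supplied by the hypothesis for that component as the chosen basepoint. I expect no genuine obstacle: the strengthening is just a matter of reading off from the earlier proof that its hypothesis on all simplices $b \colon \Delta[n] \to B$ was in fact used only for $n = 0$, and only for a single vertex per connected component of $B$.
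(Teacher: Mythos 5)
Your reading is exactly the paper's own argument: Lemma~\ref{lemma:connected_weq} is stated with an immediate \textsf{qed} precisely because the proof of Lemma~\ref{lemma:weq_fibers} already uses its hypothesis only at a single chosen vertex per connected component. You correctly identify this, and in fact spell out slightly more carefully than the paper does why basepoints in the fiber $F_1$ suffice (surjectivity of $\pi_0(F_i) \to \pi_0(E_i)$ over a connected base), so the proposal is fine and matches the intended proof.
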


\begin{corollary} \label{cor:eq_is_fib}
 The map $\Eq_B(E_1,E_2) \to B$ is a fibration.
\end{corollary}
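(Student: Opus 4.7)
The plan is to reduce a horn-filling problem for $\Eq_B(E_1,E_2) \to B$ to one for $\Hom_B(E_1,E_2) \to B$, then use Lemma~\ref{lemma:connected_weq} to check that the filler we obtain automatically lies in $\Eq_B(E_1,E_2)$.

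Concretely, suppose we are given a lifting problem
\[\xymatrix{ \Lambda^k[n] \ar[r] \ar@{^{(}->}[d] & \Eq_B(E_1,E_2) \ar[d] \\
\Delta[n] \ar[r] & B.
}\]
First, postcompose the top map with the inclusion $\Eq_B(E_1,E_2) \hookrightarrow \Hom_B(E_1,E_2)$. Since $\Hom_B(E_1,E_2) \to B$ is a fibration by Lemma~\ref{lemma:HOM_is_fib}, we obtain a diagonal lift $\Delta[n] \to \Hom_B(E_1,E_2)$ over $B$, which by the description of $\Hom_B$ corresponds to a pair $(b, u)$ with $b \colon \Delta[n] \to B$ and $u \colon b^*E_1 \to b^*E_2$.

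It remains to show that $u$ is a weak equivalence, so that by Corollary~\ref{cor:weqs_representable} this lift factors through $\Eq_B(E_1,E_2)$. Since $n \geq 1$, the horn $\Lambda^k[n]$ contains at least one vertex $v$, and by construction the restriction of $(b,u)$ along $v \hookrightarrow \Delta[n]$ agrees with the composite $v \hookrightarrow \Lambda^k[n] \to \Eq_B(E_1,E_2)$; so the pullback $v^*u$ is a weak equivalence. Since $\Delta[n]$ is connected, Lemma~\ref{lemma:connected_weq} then yields that $u$ itself is a weak equivalence, finishing the proof.

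The argument is essentially formal once Lemmas~\ref{lemma:HOM_is_fib} and~\ref{lemma:connected_weq} are in hand; the only potentially subtle point is recognising that Lemma~\ref{lemma:weq_fibers} on its own is not quite enough (it would force us to check weak equivalence at every simplex of $\Delta[n]$, not just at one vertex), which is precisely why the strengthened Lemma~\ref{lemma:connected_weq} is stated.
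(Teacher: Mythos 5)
Your proof is correct and follows essentially the same route as the paper's: both lift the horn through $\Hom_B(E_1,E_2) \to B$ using Lemma~\ref{lemma:HOM_is_fib}, then invoke Lemma~\ref{lemma:connected_weq} together with the connectedness of $\Delta[n]$ to see the filler automatically lands in $\Eq_B(E_1,E_2)$. You are slightly more explicit than the paper about locating a vertex in the horn at which the filler restricts to a weak equivalence, and your closing remark correctly identifies why the strengthened Lemma~\ref{lemma:connected_weq} rather than Lemma~\ref{lemma:weq_fibers} is what is actually needed.
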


\begin{proof}
 Suppose we wish to fill a square:
 \[\xymatrix{ \Lambda^k[n] \ar[r] \ar@{^{(}->}[d]^i & \Eq_B(E_1,E_2) \ar[d] \\
  \Delta[n] \ar@{.>}[ur] \ar[r]^b & B \\
 }\]
 By the universal property of $\Eq_B(E_1,E_2)$ this corresponds to showing that we can extend a weak equivalence $w \colon i^*b^*E_1 \to i^*b^*E_2$ over $\Lambda^k[n]$ to a weak equivalence $\overline{w} \colon b^*E_1 \to b^*E_2$ over $\Delta[n]$.

 By Lemma~\ref{lemma:HOM_is_fib}, we can certainly find some map $\overline{w}$ extending $w$.  But then since $\Delta[n]$ is connected, Lemma~\ref{lemma:connected_weq} implies that $\overline{w}$ is a weak equivalence.
\end{proof}

\section{Univalence}  \label{sec:4}

Let $p \colon E \to B$ be a fibration.  We then have two fibrations over $B \times B$, given by pulling back $E$ along the projections.  Call the object of weak equivalences between these $\Eq(E) := \Eq_{B \times B}(\pi_1^*E,\pi_2^*E)$.  Concretely, simplices of $\Eq(E)$ are triples
\[ (b_1,b_2 \in B_n,\, w \colon b_1^*E \to b_2^*E ). \]

By Corollary~\ref{cor:weqs_representable}, a map $f \colon X \to \Eq(E)$ corresponds to a pair of maps $f_1, f_2 \colon X \to B$ together with a weak equivalence $f_1^*E \to f_2^*E$ over $X$.  In particular, there is a diagonal map $\delta \colon B \to \Eq(E)$, corresponding to the triple $(1_B,1_B,1_E)$, and sending a simplex $b \in B_n$ to the triple $(b,b,1_{E_b})$.

There are also source and target maps $s,t \colon \Eq(E) \to B$, given by the composites $\Eq(E) \to B \times B \to^{\pi_i} B$, sending $(b_1,b_2,w)$ to $b_1$ and $b_2$ respectively.  These are both retractions of $\delta$; and by Corollary~\ref{cor:eq_is_fib}, if $B$ is fibrant then they are moreover fibrations.

\begin{definition}
 A fibration $p \colon E \to B$ is called {\em univalent} if $\delta \colon B \to \Eq (E)$ is a weak equivalence.
\end{definition}

Since $\delta$ is always a monomorphism (thanks to its retractions), this is equivalent to saying that $B \to \Eq(E) \to B \times B$ is a (trivial cofibration, fibration) factorisation of the diagonal $\Delta \colon B \to B \times B$, i.e.\ that $\Eq(E)$ is a \emph{path object} for $B$.

\begin{theorem} \label{thm:univalence}
 The fibration $\pi \colon \UUt \to \UU$ is univalent.
\end{theorem}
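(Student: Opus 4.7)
The plan is to reduce univalence of $\pi$ to showing that $s \colon \Eq(\UUt) \to \UU$ is a trivial fibration. Since $s \circ \delta = 1_\UU$ and a Kan fibration is a weak equivalence if and only if it is a trivial fibration, this will imply $\delta$ is a weak equivalence by two-out-of-three. By the small object argument, it then suffices to show that $s$ has the right lifting property against the boundary inclusions $j \colon \partial\Delta[n] \hookrightarrow \Delta[n]$.

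Unpacking such a lifting problem via Corollaries~\ref{cor:U_classifies} and~\ref{cor:weqs_representable}, the data consists of an $\alpha$-small fibration $q \colon Q \to \Delta[n]$, an $\alpha$-small fibration $p' \colon P' \to \partial\Delta[n]$, and a weak equivalence $w \colon j^*Q \to P'$ over $\partial\Delta[n]$; a filler corresponds to producing an $\alpha$-small fibration $q' \colon Q' \to \Delta[n]$ together with a weak equivalence $w' \colon Q \to Q'$ over $\Delta[n]$ whose restrictions along $j$ recover $p'$ and $w$. To construct $(Q', w')$, I would factor $w = w_2 \circ w_1$ in $\sSets/\partial\Delta[n]$ through an intermediate fibration $\tilde P \to \partial\Delta[n]$, with $w_1$ a trivial cofibration and $w_2$ a trivial fibration between fibrations over $\partial\Delta[n]$, and then extend the two pieces separately.

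The trivial-fibration piece $w_2$ is precisely the setting of Joyal's Lemma~\ref{Joyal_lemma}: once an extension of $\tilde P$ to a fibration over $\Delta[n]$ is in hand, $w_2$ itself can be extended to an $\alpha$-small trivial fibration over $\Delta[n]$, with smallness preserved by part~3 of Lemma~\ref{lemma:exp_along_cofib}. The trivial-cofibration piece $w_1$ is the essential step; for this I would mimic the proof of Theorem~\ref{U:KanCpx}, applying Quillen's Lemma~\ref{Quillen_lemma} to factor $q$ through a minimal fibration $q_m \colon Q_m \to \Delta[n]$, and then using May's Lemma~\ref{May_lemma} together with contractibility of $\Delta[n]$ to present $Q_m$ as a product $F \times \Delta[n]$. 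The rigid product structure along $\partial\Delta[n]$ then lets $w_1$ be spread to an extension across all of $\Delta[n]$, after which one glues the original trivial fibration $Q \to Q_m$ back on and verifies smallness using regularity of $\alpha$, exactly as in the final step of the proof of Theorem~\ref{U:KanCpx}.

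The main obstacle will be this trivial-cofibration step. There is no off-the-shelf analogue of Joyal's Lemma for extending trivial cofibrations along arbitrary cofibrations while keeping both ends fibrant and $\alpha$-small: the naive pushout $Q \cup_{j^*Q} \tilde P$ gives the right pullback along $j$ and is a trivial cofibration of $Q$, but has no reason to be fibrant over $\Delta[n]$. The minimal-fibration presentation of $Q$ is precisely what breaks this deadlock, by producing an explicit product model over the contractible base in which the extension can be performed compatibly with all the given data.
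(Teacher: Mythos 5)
Your opening move is the right one and matches the paper: show that one of the two projections $\Eq(\UUt)\to\UU$ is a trivial fibration, then conclude by two-out-of-three since it retracts $\delta$. But your choice to work with $s$ rather than $t$ is not an innocent symmetry, and it is here that the proposal runs into trouble. With $t$, the datum already extended over $\Delta[n]$ is the \emph{target} $\Ebar_2$, and the unknown is the \emph{source} $\Ebar_1$; the paper then constructs $\Ebar_1$ in one stroke as the pullback of $i_*w$ along the unit $\eta\colon\Ebar_2\to i_*E_2$, using the right adjoint $i_*$ to the restriction functor. With $s$, the roles flip: the source is extended, the target $P'$ is unknown, and the naive dual construction would use $i_!$, which does not preserve fibrations — exactly the obstacle you flag in your last paragraph. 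Your proposed repair does not close this gap.

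Concretely, two steps fail. First, the trivial-fibration piece $w_2\colon\tilde P\to P'$: Lemma~\ref{Joyal_lemma} extends the \emph{domain} of a trivial fibration along a cofibration of its codomain (given $t\colon Y\to X$ and $j\colon X\hookrightarrow X'$, it produces $Y'$ over $X'$). Here you would need to extend the \emph{codomain} $P'$, given an extension of the domain $\tilde P$; this is not an instance of the lemma, and no adjoint $j_*$ or $j_!$ produces a fibration over $\Delta[n]$ restricting to $P'$ in general. Second, the trivial-cofibration piece $w_1\colon j^*Q\to\tilde P$: the strategy from Theorem~\ref{U:KanCpx} hinges on Lemma~\ref{May_lemma} applied over a contractible base ($\Lambda^k[n]$), yielding a product presentation that extends for free. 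But the boundary $\partial\Delta[n]$ is a sphere, not contractible, so May's Lemma gives no product structure for $\tilde P$ over $\partial\Delta[n]$; and the product $Q_m\cong F\times\Delta[n]$ obtained from $q$ lives on the wrong side of $w_1$ to spread $\tilde P$ anywhere. Note also that extending an arbitrary fibration along $\partial\Delta[n]\hookrightarrow\Delta[n]$ cannot be possible in general — that would make $\UU$ contractible — so the weak equivalence $w_1$ must be used essentially, not just as a passenger as your sketch suggests.

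If you run your reduction with $t$ in place of $s$, the unknown becomes the source, and the construction $\Ebar_1 := \Ebar_2\times_{i_*E_2} i_*E_1$ is available; the factorization of $w$ then enters only as a device for verifying that this single construction yields a fibration and a weak equivalence (trivial fibration case via part 1 of Lemma~\ref{lemma:exp_along_cofib}; trivial cofibration case by extending a deformation retraction via the homotopy lifting extension property against $\Ebar_2\to B$). That is the route the paper takes, and it sidesteps both of the gaps above.
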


\begin{proof}
We will show that $t$ is a trivial fibration.  Since it is a retraction of $\delta$, this implies by 2-out-of-3 that $\delta$ is a weak equivalence.

So, we need to fill a square
 \[\xymatrix{
  A \ar[r] \ar@{^{(}->}[d]_i & \Eq(\UUt) \ar[d]^{t} \\
  B \ar[r] \ar@{.>}[ur]     & \UU
 }\]
where $i \colon A\ \to/^{(}->/ B$ is a cofibration.

By the universal properties of $\UU$ and $\Eq(\UUt)$, these data correspond to a weak equivalence $w \colon E_1 \to E_2$ between small well-ordered fibrations over $A$, and an extension $\Ebar_2$ of $E_2$ to a small, well-ordered fibration over $B$; and a filler corresponds to an extension $\Ebar_1$ of $E_1$, together with a weak equivalence $\overline{w}$ extending $w$:
\[\begin{tikzpicture}[x={(1.5cm,0cm)},y={(0cm,1.8cm)},z={(1.8cm,-0.7cm)}]
  \node (A) at (0,0,0) {$A$};
  \node (E1) at (0,1,-0.5) {$E_1$};
  \node (E2) at (0,1,0.5) {$E_2$};
  \draw[->] (E1) to (A);
  \draw[->] (E2) to (A);
  \draw[->,auto] (E1) to node {$\scriptstyle w$} (E2);
  \node (B) at (2,0,0) {$B$};
  \node (Eb1) at (2,1,-0.5) {$\Ebar_1$};
  \node (Eb2) at (2,1,0.5) {$\Ebar_2$};
  \draw[->,dashed] (Eb1) to (B);
  \draw[->] (Eb2) to (B);
  \draw[->,dashed,auto] (Eb1) to node {$\scriptstyle \overline{w}$} (Eb2);
  \draw[->] (A) to (B);
  \draw[->] (E2) to (Eb2);
  \draw (0.2,0.7,0.35) -- (0.4,0.7,0.35) -- (0.4,0.85,0.425); 
  \draw[->,dashed] (E1) to (Eb1);
  \draw[dashed] (0.2,0.8,-0.4) -- (0.4,0.8,-0.4) -- (0.4,0.9,-0.45); 
\end{tikzpicture}\]

As usual, it is sufficient to construct this first without well-orderings on $\Ebar_2$; these can then always be chosen so as to extend those of $E_2$. \\

Recalling Lemmas~\ref{lemma:exp_along_cofib}--\ref{Joyal_lemma}, we define $\Ebar_1$ and $\overline{w}$ as the pullback
 \[\xymatrix{
  \Ebar_1 \ar[d]_{\overline{w}} \ar[r] \pb & i_* E_1 \ar[d]^{i_* w} \\
  \Ebar_2 \ar[r]_{\eta}                    & i_* E_2
 }\]
in $\sSets/B$, where $\eta$ is the unit of $i^* \adjoint i_*$ at $\Ebar_2$.  To see that this construction works, it remains to show:
\begin{enumerate}[(a)]
\item $i^*\Ebar_1 \iso E_1$ in $\sSets/A$, and under this, $i^* \overline{w}$ corrsponds to $w$;
\item $\Ebar_1$ is small over $B$; 
\item $\Ebar_1$ is a fibration over $B$, and $\overline{w}$ is a weak equivalence.
\end{enumerate}

For (a), pull the defining diagram of $\Ebar_1$ back to $\sSets/A$; by Lemma~\ref{lemma:exp_along_cofib} part 3, we get a pullback square
\[\xymatrix{
  i^*\Ebar_1 \ar[d]_{i^*\overline{w}} \ar[r] \pb & E_1 \ar[d]^w \\
  E_2 \ar[r]^{1_{E_2}}                               & E_2
}\]
in $\sSets/A$, giving the desired isomorphism.

For (b), Lemma~\ref{lemma:exp_along_cofib} part 4 gives that $i_*E_1$ is $\alpha$-small over $B$, so $\Ebar_1$ is a subobject of a pullback of $\alpha$-small maps.

For (c), note first that by factoring $w$, we may reduce to the cases where it is either a trivial fibration or a trivial cofibration.

In the former case, by Lemma~\ref{lemma:exp_along_cofib} part 1 $i_*w$ is also a trivial fibration, and hence so is $\overline{w}$; so $\Ebar_1$ is fibrant over $\Ebar_2$, hence over $B$.

In the latter case, $E_1$ is then a deformation retract of $E_2$ over $A$; we will show that $\Ebar_1$ is also a deformation retract of $\Ebar_2$ over $B$.  Let $H \colon E_2 \times \Delta[1] \to E_2$ be a deformation retraction of $E_2$ onto $E_1$.  We want some homotopy $\Hbar \colon \Ebar_2 \times \Delta[1] \to \Ebar_2$ extending $H$ on $E_2 \times \Delta[1]$, $1_{\Ebar_1} \times \Delta[1]$ on $\Ebar_1 \times \Delta[1]$, and $1_{\Ebar_2}$ on $\Ebar_2 \times \{0\}$.  Since these three maps agree on the intersections of their domains, this is exactly an instance of the homotopy lifting extension property, i.e.\ a square-filler
\[\xymatrix{
  (E_2 \times \Delta[1]) \cup (\Ebar_1 \times \Delta[1]) \cup (\Ebar_2 \times \{0\}) 
     \ar@{^{(}->}[d] \ar[rr]^<>(0.4){H \cup 1 \cup 1}   
     & & \Ebar_2 \ar[d] \\
  \Ebar_2 \times \Delta[1] \ar[rr] \ar@{.>}[urr]|{\Hbar} & & B
} \qquad \quad \]
which exists since the left-hand map is a trivial cofibration.  

For $\Hbar$ to be a deformation retraction, we need to see that $\Hbar_{\{1\}} \colon \Ebar_2 \to \Ebar_2$ factors through $\Ebar_1$.  By the definition of $\Ebar_1$, a map $f \colon X \to \Ebar_2$ over $b \colon X \to B$ factors through $\Ebar_1$ just if the pullback $i^*f \colon i^*X \to E_2$ factors through $E_1$.  In the case of  $\Hbar_{\{1\}}$, the pullback is by construction $i^*(\Hbar_{\{1\}}) = (i^*\Hbar)_{\{1\}} = H_{\{1\}} \colon E_2 \to E_2$, which factors through $E_1$ since $H$ was a deformation retraction onto $E_1$.

So $\overline{w}$ embeds $\Ebar_1$ as a deformation retract of $\Ebar_2$ over $B$; thus $\Ebar_1$ is a fibration over $B$ and $\overline{w}$ a weak equivalence, as desired.
\end{proof}

\section{Uniqueness in the universal property of \protect{$\U$}}  \label{sec:5}

Finally, as promised, we will give a uniqueness statement for the representation of a small fibration as a pullback of $\pi \colon \UUt \to \UU$: we show that the space of such representations is contractible.

Let $p \colon E \to B$ be any fibration. We define a functor
\[ \P_{p} \colon \sSets^{\op} \to \Sets \]
taking $\P_{p} (X)$ to be the set of pairs of a map $f \colon X \times B \to \UU$, and a weak equivalence $w \colon X \times E \to f^*\UUt$ over $X \times B$; equivalently, the set of squares
\[\xymatrix{
 X \times E \ar[r]^{f'} \ar[d]_{X \times p} & \UUt \ar[d]^{\pi} \\
 X \times B \ar[r]_{f}                     & \UU
}\]
such that the induced map $X \times E \to f^*\UUt$ is a weak equivalence.  Lemma~\ref{lemma:weqs_pull_back} ensures that this is functorial in $X$, by pullback.

\begin{lemma}
The functor $\P_{p}$ is representable, represented by the simplicial set $(\PP_{p})_n := \P_p (\Delta[n])$.
\end{lemma}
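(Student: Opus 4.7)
The plan is to follow the template of the earlier representability proof for $\W$: show that $\P_p$ preserves all limits, then deduce representability by the Yoneda-style chain of isomorphisms used just after Lemma~\ref{lemma:w-preserves-lims}.

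The core step is that, for any diagram $F \colon \I \to \sSets$ with colimit $X = \colim_{\I} F(i)$ and colimit injections $\nu_i \colon F(i) \to X$, the canonical map $\P_p(X) \to \lim_{\I} \P_p(F(i))$ is a bijection. The underlying ``map data'' transfers for free: the functors $- \times B$ and $- \times E$ are left adjoints on $\sSets$ and hence preserve colimits, so a map $f \colon X \times B \to \UU$ corresponds to a compatible family $f_i \colon F(i) \times B \to \UU$, and similarly a map $X \times E \to \UUt$ lying over such an $f$ corresponds to a compatible family of such maps. The content, therefore, is that the weak-equivalence condition descends: given compatible $w_i \colon F(i) \times E \to f_i^*\UUt$, each a weak equivalence, the glued $w \colon X \times E \to f^*\UUt$ is again a weak equivalence. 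The converse direction is immediate: the restrictions $w_i = (\nu_i \times 1_B)^* w$ of a weak equivalence are themselves weak equivalences by Lemma~\ref{lemma:weqs_pull_back}.

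The main obstacle is this ``gluing'' direction. By Lemma~\ref{lemma:weq_fibers}, it suffices to verify that for every simplex $(x,b) \colon \Delta[n] \to X \times B$, the pullback $(x,b)^*w$ is a weak equivalence. Since colimits in $\sSets$ are computed levelwise, the simplex $x \in X_n$ factors through some $\nu_i$, say as $\nu_i \circ x'$; hence $(x,b)$ factors as $(\nu_i \times 1_B) \circ (x',b)$. By construction of $w$ from the compatible family, the pullback $(x,b)^* w$ coincides with $(x',b)^* w_i$, and is thus a weak equivalence as a pullback of the weak equivalence $w_i$ (by Lemma~\ref{lemma:weqs_pull_back} again).

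With limit-preservation in hand, representability is formal: writing $X \cong \colim_{\int X} \Delta[n]$ and chaining
\[\P_p(X) \cong \lim_{\int X} \P_p(\Delta[n]) \cong \lim_{\int X} (\PP_p)_n \cong \lim_{\int X} \sSets(\Delta[n], \PP_p) \cong \sSets(X, \PP_p)\]
(applying Yoneda in the third step and continuity of $\sSets(-,\PP_p)$ in the fourth) identifies $\P_p$ with $\sSets(-,\PP_p)$, as required.
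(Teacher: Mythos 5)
Your proof is correct, but it takes a different route from the paper's. The paper introduces an auxiliary functor $\Q_p$ (all commutative squares from $p$ to $\pi\colon\UUt\to\UU$, without the weak-equivalence condition), observes that $\Q_p$ is already known to be representable via the exponential objects of $\sSets$ (its representing object is a pullback of exponentials), and then uses Lemma~\ref{lemma:weq_fibers} only to show that $\P_p$ is the ``simplex-local'' subfunctor of $\Q_p$ whose representing map factors through the simplicial subset $\PP_p \subseteq \QQ_p$. You instead bypass $\Q_p$ entirely and redo the template of Lemma~\ref{lemma:w-preserves-lims}: you check directly that $\P_p$ sends colimits to limits --- using that $-\times B$ and $-\times E$ preserve colimits to handle the map data, then Lemma~\ref{lemma:weq_fibers} together with Lemma~\ref{lemma:weqs_pull_back} for the weak-equivalence condition --- and conclude by the formal chain of isomorphisms. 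Both proofs hinge on the same two lemmas about weak equivalences; the paper's version is shorter because it can quote representability of $\Q_p$ off the shelf, while yours is somewhat longer but more self-contained (it does not invoke the exponential construction in $\sSets$, only colimit-preservation by products). Both are valid.
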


\begin{proof}
Let $\Q_p(X)$ be the set of all commutative squares $(f,f')$ from $p$ to $\UUt \to \UU$; we know that $\Q_p$ is represented by $\QQ_p := E^{\UUt} \times_{E^\UU} B^\UU$.

Now, $\P_p$ is a subfunctor of $\Q_p$.  By Lemma \ref{lemma:weq_fibers}, an element $(f,f') \in \Q_p(X)$ lies in $\P_p(X)$ if and only if for each $x \colon \Delta[n] \to X$, the pullback $x^*(f,f')$ lies in $\P_p(X)$; that is, if its representing map $X \to \QQ_p $ factors through $\PP_p$.
\end{proof}

\begin{proposition}
Let $p$ be an $\alpha$-small fibration. Then $\PP_{p}$ is contractible.
\end{proposition}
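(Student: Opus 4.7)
The plan is to prove that $\PP_p \to 1$ is a trivial fibration, which yields at once that $\PP_p$ is a Kan complex and that it is contractible. Using the representability of $\P_p$ by $\PP_p$ from the preceding lemma, this amounts to showing that for every cofibration $i \colon X_0 \hookrightarrow X$, the restriction map $\P_p(X) \to \P_p(X_0)$ is surjective. Concretely, given $(f_0, w_0) \in \P_p(X_0)$ -- with $f_0 \colon X_0 \times B \to \UU$ and a weak equivalence $w_0 \colon X_0 \times E \to f_0^*\UUt$ over $X_0 \times B$ -- I need to produce an extension $(\bar{f}, \bar{w}) \in \P_p(X)$.

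My plan is to convert this into a lifting problem against $s \colon \Eq(\UUt) \to \UU$. Since $X \times p \colon X \times E \to X \times B$ is an $\alpha$-small fibration, weak universality (Corollary~\ref{cor:U_classifies}) furnishes some $g \colon X \times B \to \UU$ together with an isomorphism $\varphi \colon X \times E \iso g^*\UUt$ over $X \times B$. Setting $g_0 := g \circ (i \times 1_B)$ and $\varphi_0$ for the restriction of $\varphi$, the composite $w_0' := w_0 \circ \varphi_0^{-1}$ is a weak equivalence $g_0^*\UUt \to f_0^*\UUt$. The triple $(g_0, f_0, w_0')$ then assembles, via Corollary~\ref{cor:weqs_representable}, into a map $\alpha \colon X_0 \times B \to \Eq(\UUt)$ with $s \circ \alpha = g_0$, fitting into the commutative square
\[ \xymatrix{X_0 \times B \ar[r]^-{\alpha} \ar@{^{(}->}[d]_{i \times 1_B} & \Eq(\UUt) \ar[d]^s \\ X \times B \ar[r]_-g & \UU} \]

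Now $s$ is a trivial fibration: it is a fibration by Corollary~\ref{cor:eq_is_fib} (applicable because $\UU$ is a Kan complex by Theorem~\ref{U:KanCpx}), and it retracts the diagonal $\delta$, which is a weak equivalence by univalence (Theorem~\ref{thm:univalence}); two-out-of-three then forces $s$ to be a weak equivalence. Since $i \times 1_B$ is a cofibration, the square admits a diagonal filler $\bar\alpha \colon X \times B \to \Eq(\UUt)$, which decomposes as $(g, \bar{f}, \bar{w}')$ with $\bar{f}$ extending $f_0$ and $\bar{w}' \colon g^*\UUt \to \bar{f}^*\UUt$ a weak equivalence extending $w_0'$. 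Setting $\bar{w} := \bar{w}' \circ \varphi$ yields the desired extension: it is a weak equivalence, and its restriction to $X_0 \times E$ equals $w_0' \circ \varphi_0 = w_0$ by construction.

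The main obstacle I foresee is spotting this reformulation -- introducing the auxiliary classifying map $g$ for $X \times E$ as a ``pivot'' that bridges $(f_0, w_0)$ to the lifting property of $s$. Once this bridge is in place, univalence (in the guise of $s$ being a trivial fibration) does the essential work, and the rest is routine bookkeeping to verify that the components of $\bar\alpha$ genuinely extend the original data along $i$.
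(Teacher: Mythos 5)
Your proof is correct and takes essentially the same route as the paper's: both reduce to showing $\PP_p \to 1$ is a trivial fibration by translating the lifting problem into one against $s \colon \Eq(\UUt) \to \UU$, which is a trivial fibration by univalence. The only cosmetic difference is that the paper fixes a single classifying map $\name{p} \colon B \to \UU$ for $E$ once and for all and uses $\name{p} \cdot \pi_2$ uniformly, whereas you choose a fresh classifying map $g$ for $X \times E$ over $X \times B$ for each lifting problem and do a bit more bookkeeping with the isomorphism $\varphi$; both work equally well.
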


\begin{proof}
By Corollary~\ref{cor:U_classifies}, take some map $\name{p} \colon B \to \UU$ such that $E \iso \name{p}^*\UUt$.

Now, for any $X$, maps $X \to \PP_p$ correspond by definition to pairs of maps $f \colon X \times B \to \UU$, $w \colon X \times E \to f^*\UUt$.   But $X \times E \iso (\name{p} \cdot \pi_2)^* \UUt$ over $X$; so such pairs also correspond to maps $\bar{f} \colon X \times B \to \Eq(\UUt)$ such that $s \cdot \bar{f} = \name{p} \cdot \pi_2 \colon X \times B \to \UU$.

From this, we conclude that $\PP_p \to 1$ is a trivial fibration: filling a square
 \[\xymatrix{
  Y \ar[r] \ar[d]       & \PP_p \ar[d] \\
  X \ar[r] \ar@{.>}[ur] & 1
 }\]
corresponds to filling the square
 \[\xymatrix{
  Y \times B \ar[r] \ar[d]                        & \Eq(\UUt) \ar[d]^{s} \\
  X \times B \ar[r]^{\name{p} \cdot \pi_2} \ar@{.>}[ur] & \UU
 }\]
but if $Y \to X$ is a cofibration, then so is $Y \times B \to X \times B$; and by univalence, $s$ is a trivial fibration; so a filler exists.
\end{proof}

\bibliographystyle{amsalphaurlmod}
\bibliography{model-bibliography}

\providecommand{\bysame}{\leavevmode\hbox to3em{\hrulefill}\thinspace}
\providecommand{\MR}{\relax\ifhmode\unskip\space\fi MR }
\providecommand{\MRhref}[2]{%
  \href{http://www.ams.org/mathscinet-getitem?mr=#1}{#2}
}
\providecommand{\href}[2]{#2}
\begin{thebibliography}{BGM59}

\bibitem[BGM59]{barratt-gugenheim-moore}
Michael~G. Barratt, Victor K. A.~M. Gugenheim, and John~C. Moore, \emph{On
  semisimplicial fibre-bundles}, Amer. J. Math. \textbf{81} (1959), 639--657.

\bibitem[GJ09]{goerss-jardine}
Paul~G. Goerss and John~F. Jardine, \emph{Simplicial homotopy theory}, Modern
  Birkh{\"a}user Classics, Birkh{\"a}user, 2009.

\bibitem[Hov99]{hovey:book}
Mark Hovey, \emph{Model categories}, Mathematical Surveys and Monographs,
  vol.~63, American Mathematical Society, Providence, Rhode Island, 1999.

\bibitem[Joy09]{joyal:theory-of-quasi-cats}
Andr{\'e} Joyal, \emph{The theory of quasi-categories and its applications},
  Vol.~II of course notes from Simplicial Methods in Higher Categories, Centra
  de Recerca Matem{\`a}tica, Barcelona, 2008, 2009,
  \url{http://www.crm.es/HigherCategories/notes.html}.

\bibitem[Joy11]{joyal:kan}
Andr\'e Joyal, \emph{A result on {K}an fibrations}, unpublished note, 2011.

\bibitem[KL18]{kapulkin-lumsdaine:simplicial-model}
Chris Kapulkin and Peter~LeFanu Lumsdaine, \emph{The simplicial model of
  univalent foundations (after {V}oevodsky)}, Journal of the European
  Mathematical Society (2018), to appear, \href
  {http://arxiv.org/abs/1211.2851} {\path{arXiv:1211.2851}}.

\bibitem[May67]{may:simplicial-book}
J.~Peter May, \emph{Simplicial objects in algebraic topology}, Mathematical
  Studies, vol.~11, Van Nostrand, 1967.

\bibitem[ML98]{mac-lane:cwm}
Saunders Mac~Lane, \emph{Categories for the working mathematician}, 2 ed.,
  Graduate Texts in Mathematics, vol.~5, Springer-Verlag, New York, 1998.

\bibitem[Moe11]{moerdijk:univalence}
Ieke Moerdijk, \emph{Fiber bundles and univalence}, notes by Chris Kapulkin
  from a talk delivered for Mathematics: Algorithms, and Proofs, December 2011,
  \url{http://www.pitt.edu/~krk56/fiber_bundles_univalence.pdf}.

\bibitem[Qui68]{quillen:minimal}
Daniel~G. Quillen, \emph{The geometric realization of a {K}an fibration is a
  {S}erre fibration}, Proc. Amer. Math. Soc. \textbf{19} (1968), 1499--1500.

\bibitem[Str11]{streicher:univalence}
Thomas Streicher, \emph{A model of type theory in simplicial sets}, unpublished
  note, 2011, \url{http://www.mathematik.tu-darmstadt.de/~streicher/sstt.pdf}.

\bibitem[Voe12]{voevodsky:notes-on-type-systems}
Vladimir Voevodsky, \emph{Notes on type systems}, ongoing unpublished notes,
  2012,
  \url{http://www.math.ias.edu/~vladimir/Site3/Univalent_Foundations_files/expressions_current.pdf}.

\end{thebibliography}

\end{document}